\newtheorem{theorem}{Theorem} 
\newtheorem{lemma}[theorem]{Lemma}
\newtheorem{proposition}[theorem]{Proposition} 
\newtheorem{example}{Example}
 \newacronym{tas}{TASEP}{Totally Asymmetric Simple Exclusion Process}
\newacronym{rfm}{RFM}{Ribosome Flow Model}
\newcommand*\diff{\mathop{}\!\mathrm{d}}
\newcommand {\R}{\mathbb R}
 \newcommand{\ave}{\operatorname{ave}}
\newcommand{\rev}[1]{{{#1}}}
\title{	No Switching Policy is Optimal for a \rev{Positive} 
	Linear System with a  Bottleneck~Entrance
	}
\author{Mahdiar Sadeghi$^1$, M. Ali Al-Radhawi$^{1}$, Michael Margaliot$^2$, and Eduardo D. Sontag$^{1,3}$
	\thanks{$^1$Departments of Electrical and Computer Engineering and of Bioengineering, Northeastern University, Boston, MA 02115, USA. E-mails: \textit{sadeghi.ma@husky.neu.edu}, \textit{malirdwi@\{northeastern,mit\}.edu}, \textit{sontag@sontaglab.org} }
	\thanks{$^2$Department of Electrical Engineering-Systems, Tel Aviv University, Tel Aviv-Yafo, Israel.
		E-mail: \textit{michaelm@tauex.tau.ac.il}  }%
		\thanks{$^3$Laboratory of Systems Pharmacology, Harvard Medical School,
			Boston, MA 02115.    }
}
\begin{document}

	\maketitle
	\thispagestyle{empty}
	\pagestyle{empty}

	\begin{abstract}
	We consider a nonlinear SISO system that is a  cascade of a scalar 
	``bottleneck entrance'' and an arbitrary Hurwitz  positive  linear system. 
	This system entrains i.e. in response to a $T$-periodic inflow every solution converges to a unique~$T$-periodic solution of the system. 
We study the problem of maximizing the averaged throughput  
 via controlled switching. The objective is to choose a periodic inflow rate with a given mean value that maximizes the averaged outflow  rate of the system.  We compare two strategies: 
1)~switching between a high and low value, 
and
2)~using a constant inflow equal to the prescribed mean value. We show that no switching policy can outperform a constant inflow rate, though it can approach it asymptotically. 
We describe several potential applications
of this problem  in traffic systems, ribosome flow models, and scheduling at security checks. 
	\end{abstract}

\begin{IEEEkeywords}
		Entrainment,
		switched systems,
		airport security, 
		ribosome flow model,
		traffic systems.
\end{IEEEkeywords}
	
	\section{Introduction} \label{sec:intro}
	
		\IEEEPARstart{M}{aximizing} the throughput is crucial in a wide range of nonlinear applications such as traffic systems, ribosome flow models~(RFMs), and scheduling at security checks. 
		We model the  occupancy at time~$t$ in such applications by the 
		normalized state-variable~$x(t) \in [0,1]$.
		  In traffic systems,~$x(t)$ can be interpreted as the number of vehicles relative to the maximum capacity of a highway segment. 
			In  biological transport models,  $x(t)$ can be
			interpreted as the probability that a biological ``machine'' (e.g. ribosome,   motor protein) 
			 is bound to a specific segment of the ``trail'' it is traversing  (e.g. mRNA molecule, filament) at time~$t$.
			For the security check, it is the number of passengers at a security gate relative to its capacity.
		 
		The output in such systems is a nonnegative
		outflow which can be interpreted as the rate  of cars exiting the highway for the traffic system, ribosomes leaving the~mRNA for the~RFM~\cite{margaliot2012stability}, and passengers leaving the gate for the security check.  
		The inflow rates are often periodic, such as  those controlled by traffic light signals,
		the periodic cell-cycle division process, or periodic flight schedules.  Proper functioning often 
		requires \emph{entrainment} to such excitations i.e. internal
		processes  must operate in a periodic pattern with the same period as the excitation~\cite{Glass1979}. 
		In this case, in response   to a~$T$-periodic inflow the outflow converges to a~$T$-periodic
		pattern, and the  
		\textit{throughput} is then defined as the ratio of the average outflow relative 
		to the average inflow over the period~$T$.         
		
	As a general model for studying such applications, 
		we consider the cascade of two systems   shown in Fig.~\ref{f.cascade}. 
		The first block is called the \emph{bottleneck} and is given by:
			\begin{align} \label{eq:con}
		\dot{x}(t) &= \sigma(t) (1-x(t)) - \lambda x(t),\nonumber\\
		w(t)& = \lambda x(t),
		\end{align}
		where $\sigma(t) >0$ is the inflow rate at time~$t$,
		$x(t) \in [0,1]$ is the occupancy of the bottleneck,
		and~$\lambda>0$ controls the output flow~$w(t)$. 
		The rate of change of the occupancy is proportional to the inflow rate~$\sigma(t)$ and the
		\emph{vacancy}~$1-x(t)$, that is, as the   occupancy increases the effective entry rate decreases.
 
We   assume that the inflow is periodic with period~$T\geq 0$, i.e. $\sigma(t+T)=\sigma(t)$ for all~$t\ge0$. The occupancy~$x(t)$ (and thus also~$w(t)$) 
entrains, as the  system  is  
contractive~\cite{sontag_cotraction_tutorial,LOHMILLER1998683}. 
In other words, for any initial condition~$x(0)\in[0,1]$
the solution~$x(t)$ converges to a unique~$T$-periodic solution denoted~$x_\sigma$ and thus~$w $
converges to a~$T$-periodic solution~$w_\sigma$. 

The outflow of the bottleneck is the 
 input into  a Hurwitz    positive  linear system:
\begin{align}\label{linear_system}
\dot z &= Az + b w ,\nonumber \\
y & =c^T z,
\end{align}
where $A \in \mathbb R^{n\times n}$  is Hurwitz and
 Metzler and~$ b,c \in \mathbb R^n_+$ (see Fig.~\ref{f.cascade}).
  It is clear that for a~$T$-periodic~$\sigma(t)$, all trajectories of the cascade 
	converge to a unique trajectory~$(x_\sigma(t),z_\sigma(t))$ with~$x_\sigma(t)=x_\sigma(t+T)$
	and~$z_\sigma(t)=z_\sigma(t+T)$.

\begin{figure}
	\includegraphics[width=\columnwidth]{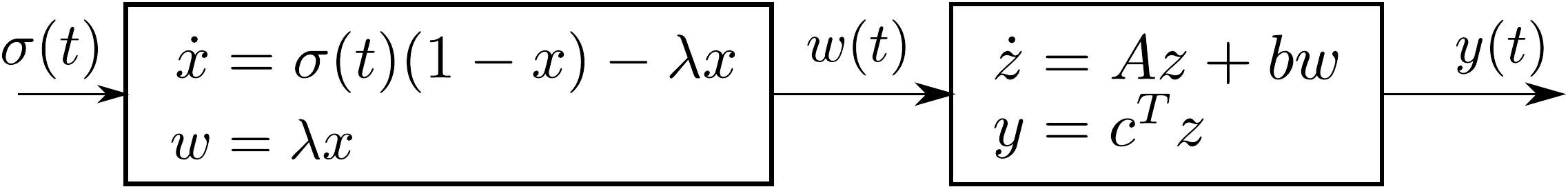}
	\caption{Cascade  system:  the bottleneck is feeding a positive linear system.} \label{f.cascade}
\end{figure}
Our goal is to compare the average (over a period) of~$y_\sigma(t)$
for various~$T$-periodic inflows. 
To make  a meaningful comparison, 
we consider inflows that have a fixed mean~$\bar \sigma>0$, i.e \begin{equation}\label{average}
\frac 1T \int_0^T \sigma(t) \diff t
 = \bar \sigma.
\end{equation}
The objective is maximize the gain of the system from~$\sigma$ to~$y$,
 i.e to maximize~$\int_0^T y_\sigma(t) \diff t $ for inputs with mean~$\bar \sigma$.

The trivial periodic inflow rate is the constant rate~$\sigma(t)\equiv \bar \sigma$. 
Here, we compare the outflow  for this constant inflow 
with that obtained for  
an inflow that switches  between two values~$\sigma_1$ and~$\sigma_2$ 
such that $\sigma_2>\bar \sigma >\sigma_1>0$.
  In other words,   $\sigma(t)\in \{\sigma_1,\sigma_2\}$   is periodic and satisfies~\eqref{average}.  

As  an application, consider  the  
 traffic system depicted in Fig.~\ref{f.traffic}.  There are two flows of vehicles with different rates~$\sigma_1,\sigma_2$ (e.g., cars and trucks)
 each moving in a separate road  and   joining 
 into a two-lane highway. This can be done in two ways. The
 first  is to place traffic lights at the end of each road, and switch between them before entering the highway as in Fig.~\ref{f.traffic}(a). The periodic traffic light signal~$\sigma(t)$ switches between the two flows, 
hence~$\sigma(t) \in \{\sigma_1,\sigma_2\}$. The second strategy is to have each road  constricted to a single lane, and then each joining the corresponding lane in the highway as in Fig.~\ref{f.traffic}(b). Hence, the inflow rate is constant
and equal to~$ (\sigma_1+\sigma_2)/2$. In both cases, the occupancy~$x(t)$ of the highway is modeled  by~\eqref{eq:con}. For a proper comparison, we require~$\tfrac 1T \int_0^T \sigma(t) \diff t =   (\sigma_1+\sigma_2)/2$ as discussed before.

\begin{figure*}[t!]
    \centering
    \begin{subfigure}[t]{0.475\textwidth}
        \centering
        \includegraphics[width=\columnwidth]{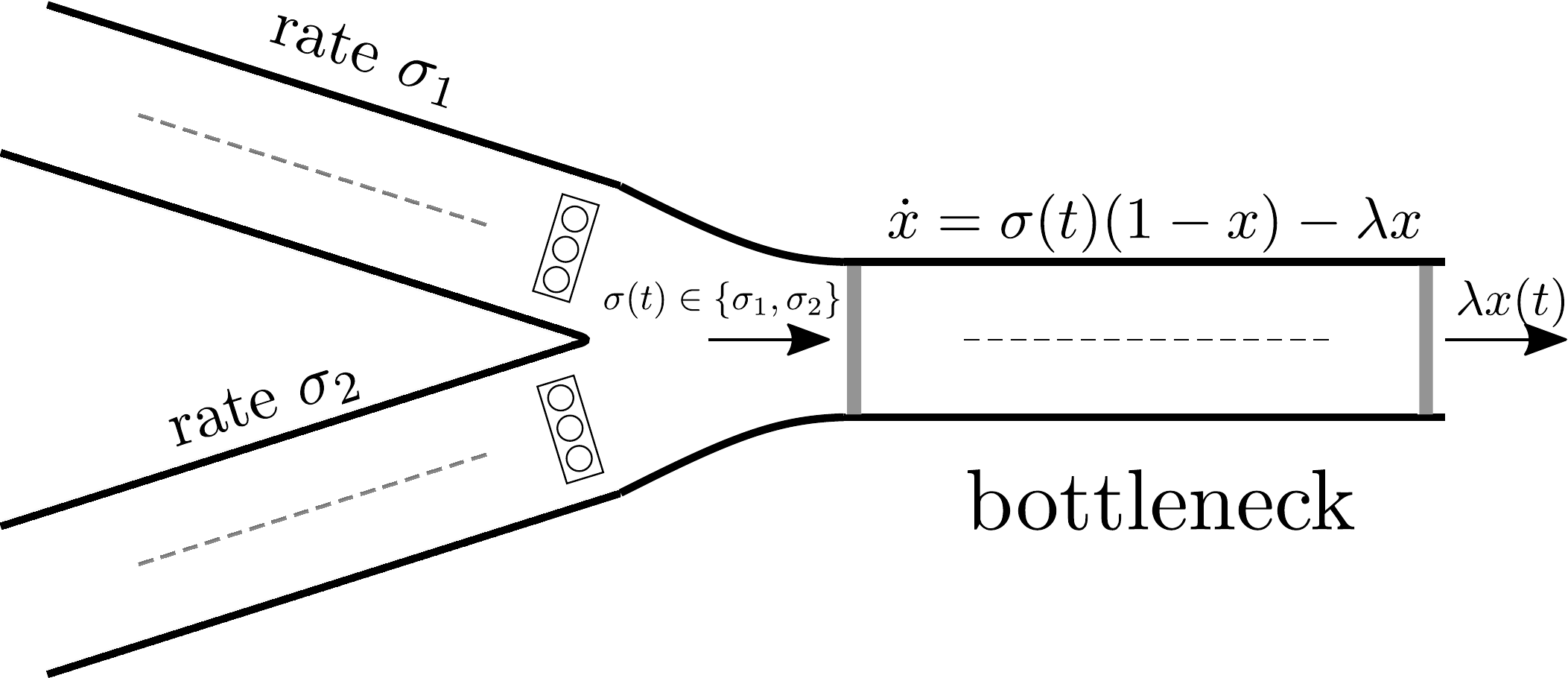}
        \caption{}
    \end{subfigure}%
    ~ 
    \begin{subfigure}[t]{0.475\textwidth}
        \centering
        \includegraphics[width=\columnwidth]{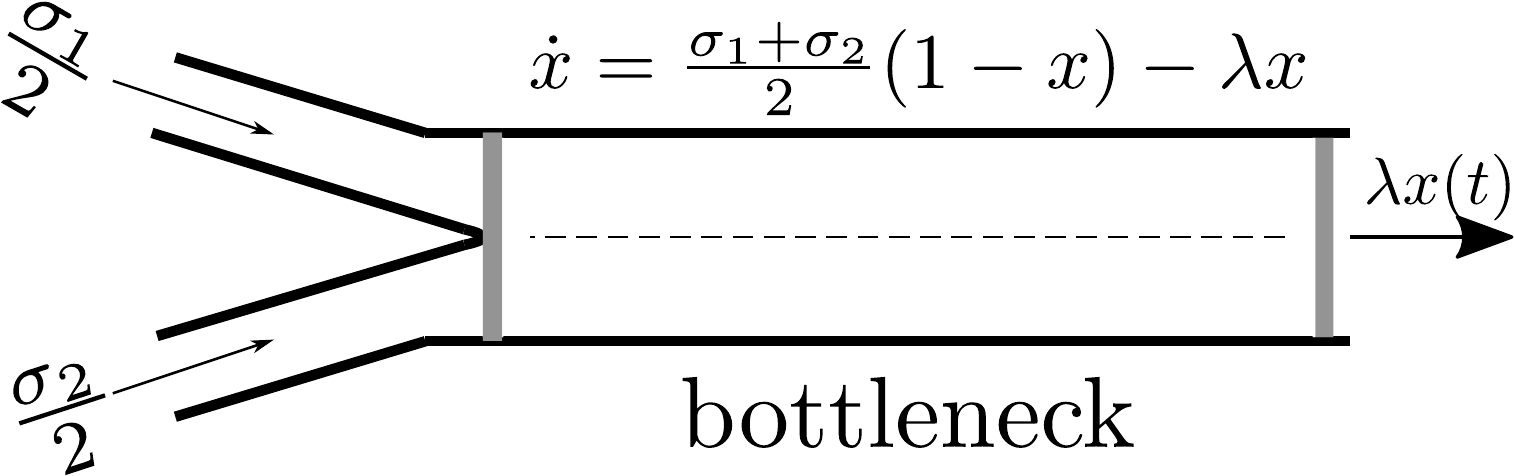}
        \caption{}
    \end{subfigure}
    \caption{Traffic system application illustrating the two strategies.
		Here $x(t) \in[0,1]$  denotes the occupancy of the bottleneck at time~$t$. 
	(a)~The inflow rate switches via periodically-varying
	traffic lights between two flows with rates $\sigma_1,\sigma_2$. At each time, either vehicles in the upper lane or vehicles in the lower lane can enter the bottleneck, but not both. 
	(b)~The double lane of each flow is restricted to a single lane and connected directly to the corresponding lane in the bottleneck. } \label{f.traffic}
\end{figure*}

%

	Another application  is the ribosome flow model~(RFM)~\cite{reuveni2011genome}
	which is 
	  a deterministic model 
	for ribosome  flow along the mRNA molecule.
	It can be derived via a  dynamic mean-field
	approximation of a fundamental model from statistical physics called
	the totally asymmetric simple exclusion process~(TASEP) \cite{TASEP_tutorial_2011,
		solvers_guide}.  In~\acrshort{tas} particles hop randomly
	along a chain of ordered sites. A site can be either   free or contain a single particle.
	Totally asymmetric means that the flow is unidirectional, and simple exclusion means that a particle can only hop into a free site. This models the fact that two particles cannot be in the same place at the same time. Note that this generates an indirect coupling between the particles. In particular, if a particle is delayed at a site for a long time then the particles behind it cannot move forward and thus 
	a ``traffic jam'' of occupied sites may evolve. There is considerable interest in the evolution
	and impact of   
	traffic jams of various ``biological machines'' in the cell (see, e.g.~\cite{Ross5911,ribo_q_2018}).
	
	 The~RFM is a nonlinear compartmental model with~$n$ sites.
	The state-variable~$x_i(t)$, $i=1,\dots,n$,
	describes the normalized density of particles
	at site~$i$ at time~$t$, so   
	 that~$x_i(t)=0$ [$x_i(t)=1$] means that site~$i$ is completely
	empty [full] at time~$t$. The state-space is thus the closed unit cube~$[0,1]^n$. The model includes~$n+1$  parameters~$\lambda_i$, $i=0,\dots,n$, 
	where~$\lambda_i>0$ controls the transition rate from site~$i$  to site~$i+1$. 
	In particular,~$\lambda_0$ [$\lambda_n$] is called the initiation  [exit] rate.
	
		The RFM dynamics is described by~$n$ first-order ODEs:
	\begin{align}\label{eq:rfm}
	\dot x_k &= \lambda_{k-1} x_{k-1}  (1-x_k ) 
		 -  \lambda_k x_k  (1-x_{k+1} ), \; k=1,\dots,n,
	\end{align} 
	where we define~$x_0(t)\equiv 1$ and~$x_{n+1}(t)\equiv 0 $.
	
	In the context of   translation, the~$\lambda_i$'s
	depend on various biomechanical properties for example
	the abundance of tRNA molecules that deliver the amino-acids to the ribosomes.
	A recent paper suggests  that cells   vary their~tRNA abundance in order to
	control the translation rate~\cite{Torrenteaat6409}. 
	Thus, it is natural to consider the case where the rates are in fact time-varying. 
	Ref.~\cite{entrainment} proved, using the fact that the~RFM
	is an (almost) contractive system~\cite{cast_book}, that if
  all the rates are jointly~$T$-periodic then every solution of the~RFM converges to a unique~$T$-periodic solution~$x^*(t) \in (0,1)^n$.

	Consider the RFM with a time-varying initiation
	rate~$\lambda_0(t)=\sigma(t)$
	and constant~$\lambda_1,\dots,\lambda_n$ such that~$\lambda_i \gg \sigma(t)$ for all~$i\geq 1$ and all~$t$. Then we can expect that the initiation rate becomes the bottleneck rate and thus~$x_i(t)$, $i=2,\dots,n$, converge to values that are close to zero, suggesting that~\eqref{eq:rfm} can be simplified to 
		\begin{align}\label{eq:rfm_l}
	\begin{split}
	\dot x_1 &=  (1-x_1 )\sigma  - \lambda_1 x_1  , \\
	\dot x_i&= \lambda_{i-1} x_{i-1}  -  \lambda_i x_{i},\;\; i\in \{2,\dots,n\}, 
	\end{split}
	\end{align} 
	which has the same form as the cascade in Fig.~\ref{f.cascade}.

	The remainder of this 
	note  is organized 
	as follows. Section~\ref{sec:formulation}
	presents a  formulation of the problem.
	Section~\ref{sec:main} describes our main results for the bottleneck system only, with
	the proof given in  Section~\ref{sec:proof}.
 Section~\ref{sec:casc} shows that the same result can be generalized to the cascade.   

%
%
%
	\section{Problem Formulation for the Bottleneck~System}\label{sec:formulation}
 Fix~$T\geq 0$. For any~$T$-periodic  function~$f:\R_+ \to \R$,  
let~$\ave (f) : =\frac{1}{T}\int_0^T f(s)\diff s$. 
Consider first only the bottleneck
model \eqref{eq:con}. 	 
A~$T$-periodic inflow rate~$\sigma(t)$ 
   induces	a unique 
attractive~$T$-periodic trajectory that we denote by~$x_\sigma(t)$ 
	 and thus a unique~$T$-periodic
	production rate~$w_\sigma (t):=\lambda x_\sigma (t)$. Thus, after a transient the
	  average production rate is~$\ave( {w_\sigma})$. 
	Now consider the bottleneck system
	with a constant inflow rate~$\bar\sigma$. 
Each trajectory converges to a unique steady-state~$\bar\sigma/(\lambda+\bar \sigma)$,
	and thus to a production rate~$\bar w:=\lambda \bar\sigma/(\lambda+\bar \sigma)$. 
	
	The question we are interested
	in is: \emph{what is the relation between~$\bar  {w}   $
	and~$\ave( w_\sigma)$}?
	To make this a  
	``fair'' comparison we consider  inflow rates from the set~$S_{\bar\sigma,T}$  that includes all 
	 the  admissible rates~$\sigma(t)$  that 
	are~$T$-periodic and satisfy~$\ave(\sigma) = \bar\sigma$. \rev{We note that this problem is related to \emph{periodic optimal control} \cite{bittanti73,gilbert77} where the goal is to find an optimal control~$u$ under the constraint that~$x(T,u)=x(0)$, yet
	without the additional constraint  enforcing that the average on the control is fixed.}

	We call~$ \sup_{\sigma  \in S_{\bar\sigma,T}}   {\ave(w_\sigma)}/{\bar w}, $ where the~$\sup$ is with respect to all the (non-trivial) rates in~$S_{\bar\sigma,T}$, 
	the \emph{periodic gain} 
	of the bottleneck  over~$S_{\bar\sigma,T}$.
	
	Indeed, one can naturally argue that the average production rate over a period,
	rather than the instantaneous  value, 
	is the  relevant quantity.  Then a periodic gain~$>1$ 
	implies that we can ``do better'' using periodic rates. A periodic gain  one implies that we do not  lose  nor  gain
		anything with respect to the constant rate~$\sigma(t)\equiv \bar\sigma$.
	A periodic gain~$<1$ implies that 
	for any (non-trivial) periodic rate the average production rate is lower than the one obtained for constant
	rates. This implies that
	entrainment  always incurs a cost, as the production rate for constant rates  is higher. 
	
	\rev{In this paper, we study the periodic gain with respect to inflows that belong to a subset~$B_{\bar\sigma,T} \subset S_{\bar\sigma,T}$ that is defined as follows.} Fix~$\sigma_2>\bar\sigma > \sigma_1>0$ such that~$ (\sigma_1+\sigma_2)/2=\bar\sigma$. Let $\sigma(t)$ be any measurable function 
	satisfying~$\ave(\sigma)=\bar \sigma$ and~$\sigma(t) \in \{\sigma_1,\sigma_2\}$, or $\sigma(t) \equiv \bar\sigma$. We will actually
	allow $\sigma$ to be a combination of the two   under a specific condition to be explained below. 
	
	Since the system entrains to a periodic signal, it is sufficient to consider it on the compact interval $[0,T]$. The entrained occupancy is periodic, hence we need to enforce the condition $x(0)=x(T)$. 
	
	For $\sigma(t) \in \{\sigma_1, \sigma_2\}$ the bottleneck system
	switches between two linear systems:
	\begin{align*}
	\dot x  =  \sigma_1 - (\sigma_1+\lambda) x ,\quad
	\dot x  =  \sigma_2 - (\sigma_2+\lambda) x.
	\end{align*}

	If $\sigma(t) \equiv \bar \sigma $ then 
	\begin{align}\label{ode_constant}
	\dot x = \tfrac{\sigma_1+\sigma_2}2 (1-x) - \lambda x,
	\end{align}
	 adding the constraint $x(0)=x(T)$, this implies that $x(t) \equiv \tfrac {\bar\sigma}{\bar\sigma + \lambda}$. 
	 
	 In general, we can consider $\sigma(t) \in \{ \sigma_1, \bar \sigma, \sigma_2\}$ but to have a meaningful combination the occupancy must be constant when $\sigma(t)=\bar\sigma$. In other words, $\sigma(t) = \bar\sigma$ implies that $x(t) =  \tfrac {\bar\sigma}{\bar\sigma + \lambda}$.  Hence the set of admissible inflows is defined as follows:
	\begin{align*} 
	B_{\bar\sigma,T}  &:= \{ \sigma(t)\! \in \! \{\sigma_1,\bar \sigma,\sigma_2\} |\textstyle \ave(\sigma)= \bar\sigma, \sigma(t)\!=\!\sigma(t+T), \\ &   \text{ and } 
		\sigma(t)=\bar\sigma \text{ implies that }   x(t)=\bar\sigma/(\lambda+\bar\sigma) \}. \end{align*}
	 
	 Our objective is to study the following quantity:
	 \[ J(\sigma(t)) := \ave(\lambda x_\sigma) /( \frac{\lambda \bar\sigma}{\bar\sigma+\lambda}),\]
	 over $\sigma(t) \in B_{\bar\sigma,T}  $. 
	\rev{ Here $\frac{\lambda \bar\sigma}{\bar\sigma+\lambda}$ is the outflow for the constant input $\sigma(t)=\bar \sigma$, and $\ave(\lambda x_\sigma)$ is the outflow   along the unique globally attracting periodic orbit corresponding to the periodic switching strategy.}

\section{ Periodic Gain of the  Bottleneck  System}\label{sec:main}
	The main result of this section can be stated as follows:
	\begin{theorem}\label{th1}
	Consider the bottleneck system \eqref{eq:con}. If $\sigma \in 
	B_{\bar \sigma,T}$ then $J(\sigma ) \le 1$.
			\end{theorem}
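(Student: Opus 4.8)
The plan is to reduce the statement to a bound on the period‑average of the occupancy and then extract that bound from an ``energy''-type identity. Write $\bar x:=\bar\sigma/(\bar\sigma+\lambda)$ for the constant‑input occupancy (the steady state of \eqref{ode_constant}); since $w_\sigma=\lambda x_\sigma$ and $\bar w=\lambda\bar x$, the inequality $J(\sigma)\le1$ is exactly $\ave(x_\sigma)\le\bar x$. A pointwise comparison of $x_\sigma(t)$ with $\bar x$ is of no use here — just after $\sigma$ jumps up one still has $x_\sigma(t)<\bar x$, so the sign of $x_\sigma(t)-\bar x$ need not match that of $\sigma(t)-\bar\sigma$ — so I would work with two integral identities and subtract them.

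First I would introduce the deviation $\xi(t):=x_\sigma(t)-\bar x$. Substituting into \eqref{eq:con} and using $\lambda\bar x=\bar\sigma(1-\bar x)$, one obtains the scalar $T$-periodic equation
\[ \dot\xi = (1-\bar x)\bigl(\sigma(t)-\bar\sigma\bigr)-\bigl(\sigma(t)+\lambda\bigr)\xi, \]
and by entrainment $\xi$ is $T$-periodic — the only fact about the attracting orbit that enters. Multiplying by $\xi$ and integrating over $[0,T]$, the term $\int_0^T\xi\dot\xi\,\diff t=\tfrac12(\xi(T)^2-\xi(0)^2)$ vanishes by periodicity, giving
\[ (1-\bar x)\int_0^T\bigl(\sigma-\bar\sigma\bigr)\xi\,\diff t=\int_0^T\bigl(\sigma+\lambda\bigr)\xi^2\,\diff t\ \ge\ 0; \]
since $1-\bar x>0$ this is the ``positive correlation'' estimate $\int_0^T(\sigma(t)-\bar\sigma)(x_\sigma(t)-\bar x)\,\diff t\ge0$. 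Integrating the $\dot\xi$-equation directly over $[0,T]$ and using $\ave(\sigma)=\bar\sigma$ gives the complementary identity $\int_0^T(\sigma+\lambda)\xi\,\diff t=0$. Subtracting the correlation estimate from this identity leaves $(\lambda+\bar\sigma)\int_0^T\xi\,\diff t\le0$, i.e.\ $\ave(\xi)\le0$, i.e.\ $\ave(x_\sigma)\le\bar x$, which is the claim.

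I expect the only real obstacle to be spotting the correct multiplier; once the deviation equation is written, multiplying by $\xi$ and invoking periodicity is essentially forced, and the rest is bookkeeping. (The argument never uses the specific values $\sigma_1,\sigma_2$ or the constraint defining $B_{\bar\sigma,T}$, only that $\sigma$ is positive, $T$-periodic and has mean $\bar\sigma$, so it in fact bounds the periodic gain by $1$ on all of $S_{\bar\sigma,T}$. A more computational route, following the switched‑linear picture of Section~\ref{sec:formulation}, would instead try to prove $\int_{I_2}x_\sigma\,\diff t\ge\int_{I_1}x_\sigma\,\diff t$ — the time‑average of $x_\sigma$ over the high‑rate phases dominating that over the low‑rate phases — using the explicit exponential solution on each constant‑$\sigma$ arc together with $|I_1|=|I_2|$; that reduces to the elementary inequality $(1-e^{-u})(1-e^{-w})\le\tfrac{uw}{u+w}\,(1-e^{-u-w})$ for $u,w\ge0$, which the energy identity avoids.)
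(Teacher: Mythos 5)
Your argument is correct, and it is genuinely different from — and considerably cleaner than — the paper's proof. The paper works directly with the switched linear system: it parametrizes $\sigma=\bar\sigma+\varepsilon\alpha$ with $\alpha\in\{-1,0,1\}$, handles the case of finitely many switches by pairing up $(+)$- and $(-)$-arcs and bounding each pair via the elementary exponential inequality $(1-e^{-a})(1-e^{-b})/(1-e^{-(a+b)})<ab/(a+b)$ (Lemma~\ref{lem:1}), and then extends to arbitrary measurable $\alpha$ by approximating $E^0,E^+,E^-$ with elementary sets of equal measure and invoking dominated convergence. Your route avoids all of this: passing to the deviation $\xi=x_\sigma-\bar x$, the single multiplier $\xi$ together with periodicity yields the correlation estimate $\int_0^T(\sigma-\bar\sigma)\xi\,\diff t\ge 0$, and combining it with the trivially-integrated $\dot\xi$ equation and $\ave(\sigma)=\bar\sigma$ gives $(\lambda+\bar\sigma)\int_0^T\xi\,\diff t\le 0$ immediately. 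What your approach buys is both simplicity (no arc bookkeeping, no measure-theoretic limit step) and generality: it uses nothing but $\sigma\ge 0$ (so that $\sigma+\lambda>0$), $T$-periodicity, $\ave(\sigma)=\bar\sigma$, and $\xi(0)=\xi(T)$, so it proves $J(\sigma)\le 1$ over all of $S_{\bar\sigma,T}$, not only over the bang-bang class $B_{\bar\sigma,T}$, and in particular never needs the ad hoc constraint that $\sigma(t)=\bar\sigma$ forces $x(t)=\bar x$. What the paper's explicit computation buys in exchange, as the authors note after~\eqref{eq:macc}, is a path to quantitative lower bounds on the loss $1-J(\sigma)$ as a function of the dwell times, which the energy identity does not directly provide (though one could try to extract such bounds from your equality $\int_0^T(\sigma+\lambda)\xi^2\,\diff t=(1-\bar x)\int_0^T(\sigma-\bar\sigma)\xi\,\diff t$). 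One small remark: equality $J(\sigma)=1$ in your argument forces $\int_0^T(\sigma+\lambda)\xi^2\,\diff t=0$, hence $\xi\equiv 0$ and then $\sigma\equiv\bar\sigma$ a.e., so you in fact recover the strict inequality of Proposition~\ref{th.finite} for any nontrivial input, not just those with finitely many switches.
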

	Thus,   a constant inflow cannot be outperformed by a periodic one.

\begin{example} \label{ex:1}
Consider a bottleneck system with~$\lambda=1$
and switched inflows~$\sigma(t)$
 satisfying~$\ave(\sigma)=1$.
Note that for the 
 constant inflow~$\sigma(t)\equiv 1$ the 
 output converges to~$w_\sigma (t)\equiv 1/2 $, so
$\ave(w_\sigma)=1/2$.
Fig.~\ref{f.sim}(a)  depicts a histogram of the averaged 
outflow   for  randomly generated switching signals in~$B_{\bar \sigma,T}$.
 The mean performance for the  switching laws 
 is~$20\%$ lower than that achieved by the constant inflow.  

It follows from averaging theory~\cite{khalil}[Ch.~10] or
using the  Lie-Trotter product formula~\cite{Hall2015}
that it is possible to approximate the effect of a constant inflow \rev{with any desired accuracy}
using a \rev{sufficiently} fast switching law, but such a switching law
is not practical in most applications. 

Fig.~\ref{f.sim}(b) shows a scatter plot of the average outflow
for a bottleneck system
with~$\lambda=1$ and 
different values of~$\bar\sigma$. It may be seen that
it is harder to approach the performance of the constant inflow for higher~$\bar\sigma$.
\end{example}

\begin{figure}
	\centering
	\includegraphics[width=\columnwidth]{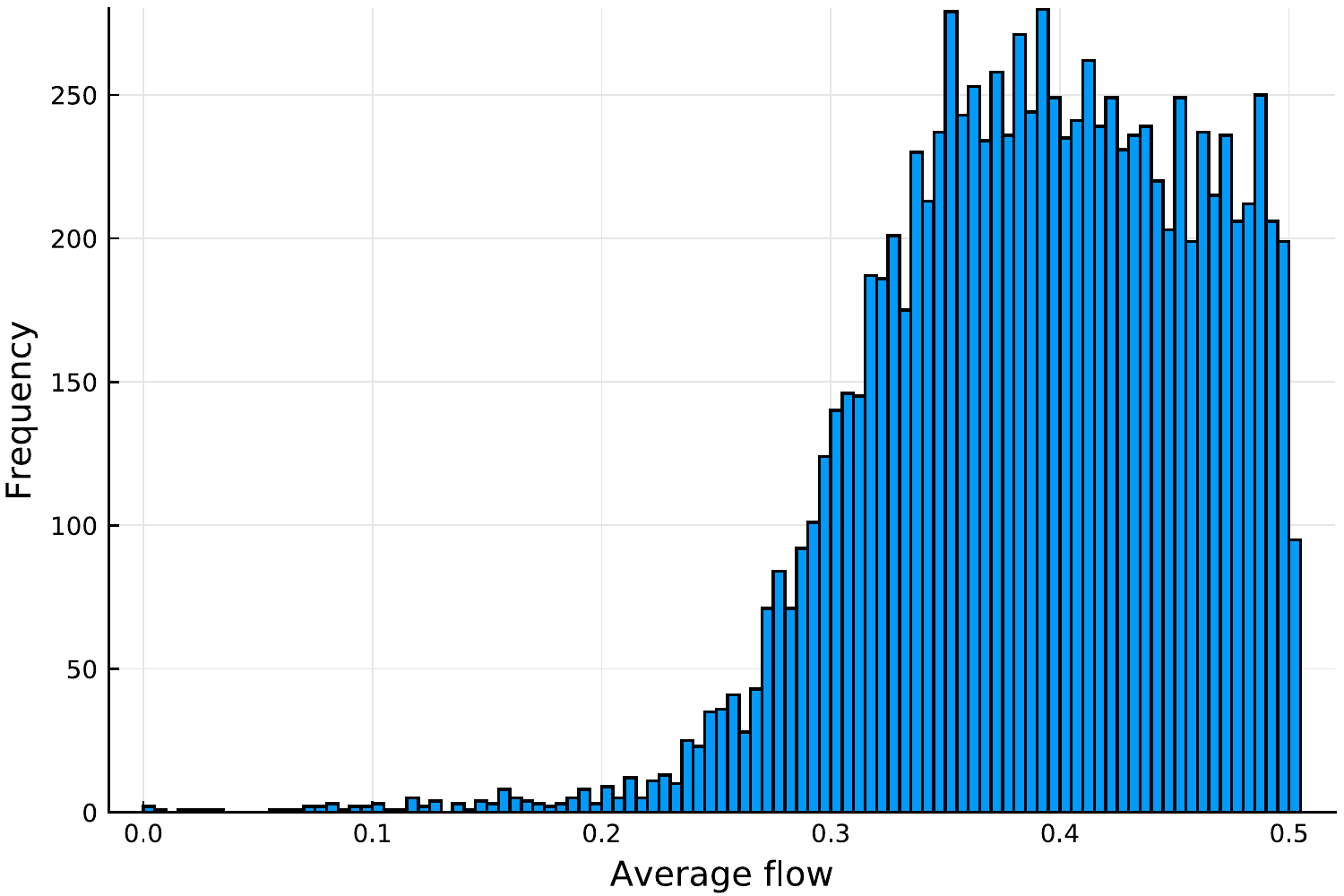}\\
	(a) \\
		\includegraphics[width=\columnwidth]{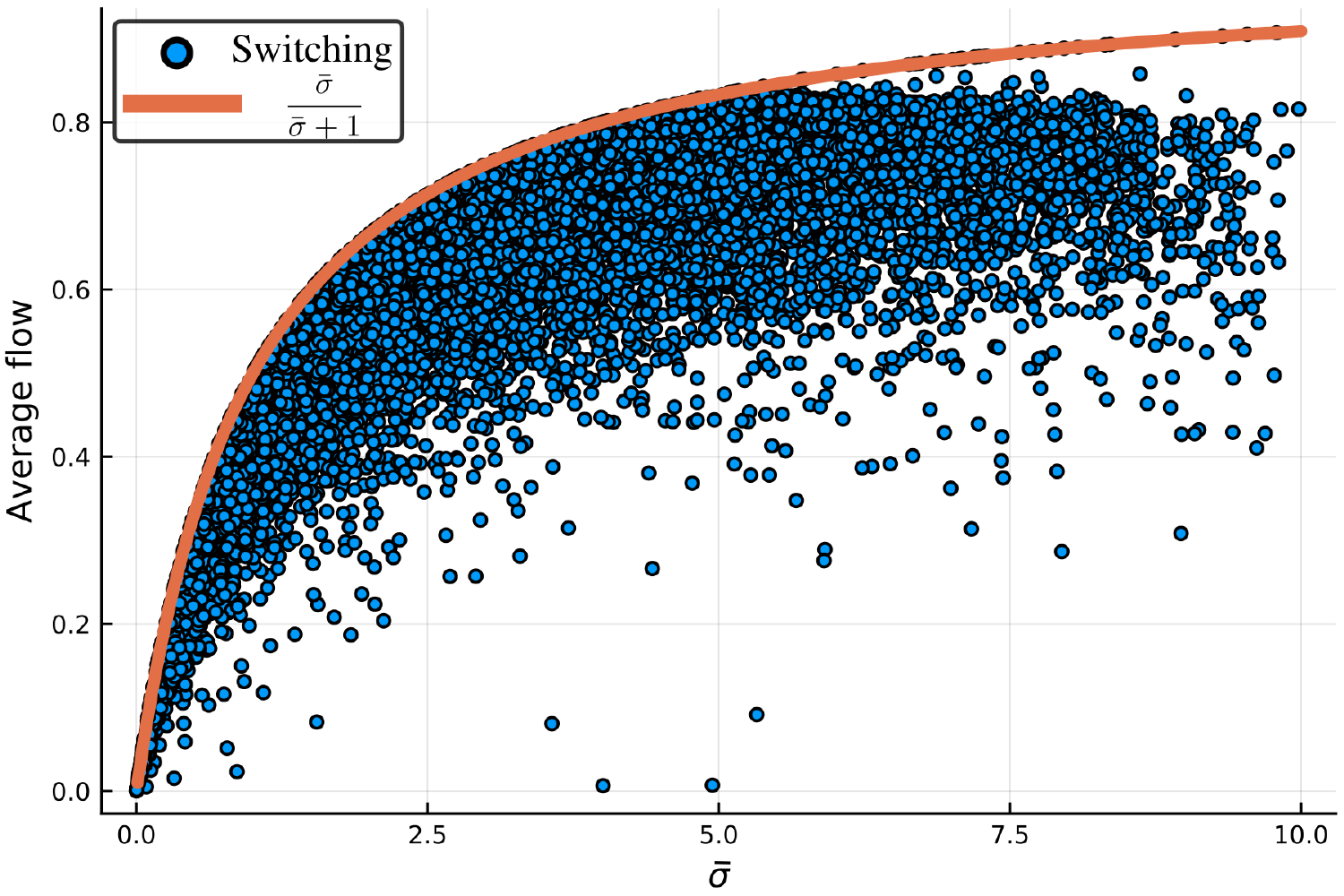}\\
		(b) \\
		\caption{Performance of the switching policy: (a) Histogram of achievable average outflow rate with $\bar\sigma=1$, $\lambda=1$. (b) Scatter plot  with~$\lambda=1$.  }
			\label{f.sim}
\end{figure}

	\section{Proof of Theorem \ref{th1}}\label{sec:proof}
	
	It is useful to parametrize the class of admissible inflows for a given average $\bar\sigma$ as follows:
	\begin{equation}\label{control_paramterization}
	\sigma(t) = \bar \sigma + \varepsilon \alpha(t),
	\end{equation}
	where $\alpha(t)$ is measurable,~$\ave(\alpha)=0$, 
	 $\alpha(t) \in \{-1,0,1\}$ for almost all~$t\in[0,T]$,
	and~$0\le \varepsilon\le \bar\sigma$.  
	\rev{Note that
	$\alpha(t)=1$  [$\alpha(t)=-1$]   corresponds
	to $\sigma(t)=\sigma_1$  [$\sigma(t)=\sigma_2$], while $\alpha(t)=0$ corresponds to the constant inflow $\sigma(t)=\bar \sigma=(\sigma_1+\sigma_2)/2$.} 
	Recall that for  every choice of~$\sigma(t)$ we let $x_\sigma$ denote the unique
	solution of~\eqref{eq:con} that satisfies~$x(0)=x(T)$.  
	
	\subsection{Finite Number of Switchings}
	  A set $E\subset [0,T]$ is said to be \emph{elementary} if it can be written as a \emph{finite} union of open, closed, or half-open intervals. For a given~$\alpha$,
		define~$E^+:=\{t|\alpha(t)=1\}$, $E^-:=\{t|\alpha(t)=-1\}$, and~$E^0:=\{t|\alpha(t)=0\}$. Then $\alpha(t)$ is said to have a finite number of switchings if $E^+$, $E^-$, and~$E^0$ are elementary sets.
	
	We are ready to state the next proposition:
	
	\begin{proposition}\label{th.finite}
		Suppose that~$\sigma(t)$ satisfies~\eqref{control_paramterization} 
		with~$\varepsilon>0$, 
		$\alpha(t)$ has   a finite number of switchings,
		and~$\mu(E^0)<T$\rev{, where $\mu$ denotes the Lebesgue measure. }
		Then,  $
		J(\sigma ) <1.
	$
	\end{proposition}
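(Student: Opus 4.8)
The plan is to reduce the inequality $J(\sigma)<1$ to a single scalar comparison between two time-averages of the entrained occupancy $x_\sigma$, and then to settle that comparison with Jensen's inequality applied to a suitable concave function. First I would rewrite the objective: since $\lambda$ cancels in the ratio defining $J$, setting $\bar x:=\bar\sigma/(\lambda+\bar\sigma)$ we have $J(\sigma)=\ave(x_\sigma)/\bar x$, so it suffices to show $\ave(x_\sigma)<\bar x$. I would also record at the outset that the attracting periodic orbit stays uniformly bounded away from the face $x=1$: whenever $\sigma(t)\le\sigma_2$, the set $\{x\le \sigma_2/(\sigma_2+\lambda)\}$ is forward invariant for~\eqref{eq:con} (at $x=\sigma_2/(\sigma_2+\lambda)$ one computes $\dot x=\lambda(\sigma-\sigma_2)/(\sigma_2+\lambda)\le 0$), hence $x_\sigma(t)\le \sigma_2/(\sigma_2+\lambda)<1$ for all $t$. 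This makes the manipulations below legitimate even though $x_\sigma$ is, for a merely measurable $\alpha$, only absolutely continuous.

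Next I would extract an exact identity from the $T$-periodicity of $x_\sigma$. Dividing $\dot x_\sigma=\sigma(1-x_\sigma)-\lambda x_\sigma$ by $1-x_\sigma$ gives $\dot x_\sigma/(1-x_\sigma)=\sigma-\lambda x_\sigma/(1-x_\sigma)$ for a.e.\ $t$; since $t\mapsto-\ln(1-x_\sigma(t))$ is absolutely continuous and $T$-periodic, integrating over $[0,T]$ makes the left-hand side vanish, so, using $\ave(\sigma)=\bar\sigma$,
\[
\ave\!\left(\frac{x_\sigma}{1-x_\sigma}\right)=\frac{\bar\sigma}{\lambda}=\frac{\bar x}{1-\bar x}.
\]

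Now let $\phi(x):=x/(1-x)$, which is a strictly increasing and strictly convex bijection of $[0,1)$ onto $[0,\infty)$, so its inverse $\psi(y):=y/(1+y)$ is strictly concave on $[0,\infty)$. Applying Jensen's inequality to $\psi$ along the periodic orbit and using the identity above,
\[
\ave(x_\sigma)=\ave\big(\psi(\phi(x_\sigma))\big)\le\psi\big(\ave(\phi(x_\sigma))\big)=\psi\!\left(\tfrac{\bar x}{1-\bar x}\right)=\bar x,
\]
which already yields $J(\sigma)\le 1$. For the strict inequality I would invoke the equality case: as $\psi$ is \emph{strictly} concave, equality forces $\phi(x_\sigma(\cdot))$, and hence $x_\sigma(\cdot)$, to be constant a.e.; by continuity $x_\sigma\equiv\bar x$, so $\dot x_\sigma\equiv 0$ and therefore $\sigma(t)\equiv\bar\sigma$, i.e.\ $\alpha\equiv 0$ and $\mu(E^0)=T$ --- contradicting the hypotheses $\varepsilon>0$ and $\mu(E^0)<T$. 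Hence $J(\sigma)<1$.

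I do not expect a genuine obstacle; the only delicate steps are technical, namely verifying the forward-invariance bound that keeps $\ln(1-x_\sigma)$ and $\phi(x_\sigma)$ well defined, and carrying out the integration and the Jensen step in their a.e./integrated forms since the solution is only absolutely continuous. I would also note that this argument never uses the finite-switching hypothesis of Proposition~\ref{th.finite}: finiteness of switchings would matter only in an alternative, more computational approach that writes the piecewise-exponential periodic orbit explicitly and compares averages by direct calculation, and it is presumably retained here either to make that route available or to prepare an approximation argument extending the result to all of $B_{\bar\sigma,T}$.
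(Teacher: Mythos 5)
Your proof is correct, but it proceeds by a genuinely different route from the paper's. The paper's argument is an explicit piecewise-exponential computation: it pairs each $(+)$-arc with the $(-)$-arc that returns to the same level, writes each arc's contribution to $\int_0^T x\,\diff t$ via the elementary identity $b\int x = a\Delta t + x_0 - x_1$ (Lemma~1), and then bounds the telescoping cross-terms $x_i^+-x_i^-$ by the strictly algebraic inequality $(1-e^{-a})(1-e^{-b})/(1-e^{-(a+b)}) < ab/(a+b)$ (Lemma~\ref{lem:1}); the finite-switching hypothesis is precisely what makes the arc decomposition and the finite sums legitimate. Your proof instead differentiates $-\ln(1-x_\sigma)$ to extract the exact periodicity constraint $\ave\bigl(x_\sigma/(1-x_\sigma)\bigr)=\bar\sigma/\lambda = \bar x/(1-\bar x)$, and then settles the comparison in one stroke by Jensen applied to the strictly concave inverse $\psi(y)=y/(1+y)$ of $\phi(x)=x/(1-x)$, with the equality case forcing $x_\sigma$ constant and hence $\alpha\equiv 0$.

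Two remarks on what each route buys. Your argument never uses finiteness of switchings, only that $\sigma$ is bounded measurable (so $x_\sigma$ is absolutely continuous and the forward-invariance bound $x_\sigma\le \sigma_2/(\sigma_2+\lambda)<1$ keeps $\ln(1-x_\sigma)$ and $\phi(x_\sigma)$ well defined); consequently it proves Proposition~\ref{th.arbitrary} directly, and in fact strengthens it to strict inequality whenever $\mu(E^0)<T$, making the approximation machinery of Lemmas~\ref{kolm}--\ref{th.elementary} unnecessary. What the paper's explicit route buys in exchange, as it notes after~\eqref{eq:macc}, is a quantitative estimate of the performance loss incurred by switching, which is lost if one only applies Jensen. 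Your forward-invariance argument deserves a sentence of justification that the \emph{periodic} orbit (not merely forward trajectories) lies in $[0,\sigma_2/(\sigma_2+\lambda)]$; this follows because the orbit through $x(0)=0$ remains in that forward-invariant set for all time while converging to $x_\sigma$, and $x_\sigma$ is $T$-periodic, so $x_\sigma$ cannot exceed the bound at any time. With that made explicit the proof is complete.
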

	
	To prove this we require three  simple  auxiliary results that we state
	as lemmas for easy reference.
	\begin{lemma}
		 Consider the scalar system~$\dot x(t) = a - b x(t)$ with~$x(t_0)=x_0$. Pick~$t_1 \geq t_0$, and
		let~$x_1:=x(t_1)$. Then:
		\begin{equation} b	\int_{t_0}^{t_1} x(t) \diff t = a (t_1-t_0) +x_0-x_1.
		\label{eq:intx}
		\end{equation}
	\end{lemma}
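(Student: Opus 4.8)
The statement is an immediate consequence of the fundamental theorem of calculus applied to the governing equation, so the plan is direct rather than structural. I would integrate the identity $\dot x(t)=a-bx(t)$ over the interval $[t_0,t_1]$. Since $x$ solves a linear scalar ODE it is $C^1$ (absolute continuity would already suffice), so the left side gives $\int_{t_0}^{t_1}\dot x(t)\,\diff t=x(t_1)-x(t_0)=x_1-x_0$, while the right side gives $\int_{t_0}^{t_1}\bigl(a-bx(t)\bigr)\,\diff t=a(t_1-t_0)-b\int_{t_0}^{t_1}x(t)\,\diff t$. Equating the two and solving for the integral term yields $b\int_{t_0}^{t_1}x(t)\,\diff t=a(t_1-t_0)-(x_1-x_0)=a(t_1-t_0)+x_0-x_1$, which is \eqref{eq:intx}.

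There is essentially no obstacle: the only point to verify is that $x$ is regular enough for the fundamental theorem of calculus, which is automatic for an ODE solution; one also notes that neither the sign of $b$, the sign of $a$, nor even the ordering $t_1\ge t_0$ is actually used (that hypothesis just records the intended application). A longer alternative would be to write the explicit solution $x(t)=\tfrac ab+\bigl(x_0-\tfrac ab\bigr)e^{-b(t-t_0)}$ (for $b\ne 0$), integrate it termwise, and simplify using $x_1=\tfrac ab+\bigl(x_0-\tfrac ab\bigr)e^{-b(t_1-t_0)}$; this reproduces \eqref{eq:intx} but is strictly more work and less transparent, so I would not take it.

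The reason to isolate this innocuous identity is its role in the proof of Proposition~\ref{th.finite}: on each maximal subinterval where the switching signal $\alpha$, and hence $\sigma$, is constant, the bottleneck \eqref{eq:con} has exactly the form $\dot x=a-bx$ with $a=\sigma$ and $b=\sigma+\lambda$, so \eqref{eq:intx} rewrites the integral of the occupancy over that subinterval as an elementary term $a(t_1-t_0)$ plus the boundary difference $x_0-x_1$. Summing over the finitely many switching intervals of a period then expresses $\int_0^T x_\sigma\,\diff t$ as a telescoping boundary sum plus a $\tau$-weighted average of the quasi-steady-state levels $\sigma/(\sigma+\lambda)$, which is the representation one needs in order to compare $\ave(x_\sigma)$ against the constant-inflow steady state $\bar\sigma/(\bar\sigma+\lambda)$.
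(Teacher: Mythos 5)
Your proof is correct, and it is genuinely simpler and more direct than the one in the paper. The paper's proof writes out the explicit exponential solution of the scalar linear ODE, $b x(t) = b e^{-b(t-t_0)} x_0 + a\bigl(1 - e^{-b(t-t_0)}\bigr)$, integrates that expression termwise, and then eliminates the exponentials by substituting the evaluated formula at $t=t_1$; this is exactly the ``longer alternative'' you mention and decline to pursue. Your approach instead just integrates the differential equation itself over $[t_0,t_1]$ and invokes the fundamental theorem of calculus, which avoids the exponential bookkeeping entirely, needs no case distinction at $b=0$ (the paper's solution formula tacitly presumes $b\neq 0$ even though the resulting identity does not), and makes transparent that the hypothesis $t_1\geq t_0$ is unused. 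Both arguments are sound; yours is the cleaner one and arguably what the lemma deserves. Your closing remarks on how the identity is deployed in Proposition~\ref{th.finite} are also accurate and show you understand why the statement is stated in this particular form.
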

	\begin{proof}
	The solution at time~$t$  of the  scalar equation   satisfies   
	\begin{align}\label{eq:oifp}
	b x(t)=   b e^{-b(t-t_0)}x_0 +  a 	\Big( 1 - e^{-b(t-t_0)}  \Big) ,
	\end{align}
	and integrating yields
	\[
	b \int_{t_0} ^{t} x(t) \diff t=a(t_1-t_0)+ b x_0 - b x_0 e^{-b(t-t_0)}  - a\Big( 1 - e^{-b(t-t_0)}  \Big) 
	\]
	and combining this with~\eqref{eq:oifp} for~$t=t_1$ gives~\eqref{eq:intx}. 
	\end{proof}

	\begin{lemma} \label{lem:1} For any~$a,b>0$  we have  
		\begin{align}
		\frac{(1-e^{-a})(1-e^{-b})}{1-e^{-(a+b)}} < \frac{a b}{a+b}.
		\end{align}
	\end{lemma}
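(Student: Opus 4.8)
The plan is to reduce the inequality to a \emph{supermultiplicativity} statement for the single function $\psi(x) := (1-e^{-x})/x$ (extended by $\psi(0):=1$). Writing $1-e^{-a}=a\,\psi(a)$, and likewise for $b$ and for $a+b$, the left-hand side equals $\tfrac{ab}{a+b}\cdot\tfrac{\psi(a)\psi(b)}{\psi(a+b)}$; after cancelling the common factor $\tfrac{ab}{a+b}$, the claim becomes simply $\psi(a)\psi(b)<\psi(a+b)$ for all $a,b>0$.

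To prove this I would pass to logarithms: set $\ell:=\ln\psi$, so that $\ell(x)=\ln(1-e^{-x})-\ln x$, and show (i) $\ell$ extends continuously to $x=0$ with $\ell(0)=0$, and (ii) $\ell$ is strictly convex on $(0,\infty)$. Granting (i)--(ii), strict convexity together with $\ell(0)=0$ makes $x\mapsto \ell(x)/x$ strictly increasing on $(0,\infty)$, hence for $a,b>0$ and $s:=a+b$ one has $\ell(a)<\tfrac{a}{s}\ell(s)$ and $\ell(b)<\tfrac{b}{s}\ell(s)$; adding gives $\ell(a)+\ell(b)<\ell(s)=\ell(a+b)$, and exponentiating yields $\psi(a)\psi(b)<\psi(a+b)$, as desired.

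The only genuine computation is the strict convexity in (ii). Differentiating twice gives $\ell''(x)=\tfrac{1}{x^2}-\tfrac{e^{x}}{(e^{x}-1)^2}$, so $\ell''(x)>0 \iff (e^{x}-1)^2>x^2e^{x} \iff e^{x/2}-e^{-x/2}>x$ (divide by $e^{x/2}$ and take the positive square root), i.e. $\sinh(x/2)>x/2$, which is immediate from the Taylor series of $\sinh$. Alternatively, $(e^{x}-1)^2>x^2e^{x}$ follows from Jensen's inequality applied to the strictly convex function $t\mapsto e^{t}$ on $[0,x]$, giving $\tfrac{1}{x}\int_0^x e^{t}\diff t>e^{x/2}$. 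For (i), and for the boundedness of $\ell'$ near $0$ used in the monotonicity-of-$\ell(x)/x$ step (one checks $\ell'(x)\to-\tfrac12$ as $x\to0^+$), everything follows from the expansion $e^{x}-1=x+x^2/2+\cdots$. I do not anticipate a real obstacle: the substantive content is exactly the elementary inequality $\sinh(u)>u$ for $u>0$, and the only place requiring care is treating the boundary point $x=0$ cleanly when deducing superadditivity of $\ell$ from its convexity.
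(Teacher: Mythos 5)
Your proof is correct, and it takes a genuinely different route from the paper's, although both ultimately rest on the same elementary kernel. The paper works bivariately: it sets $f(a,b)=\frac{1-e^{-(a+b)}}{(1-e^{-a})(1-e^{-b})}-\frac1a-\frac1b$, uses the partial-fraction identity to compute $\partial f/\partial a=\frac{1}{a^2}+\frac{1}{2-2\cosh a}$, shows this is positive via $2\cosh a-2>a^2$, concludes $f$ is increasing in each coordinate, and finishes by observing $f\to 0$ at the origin. You instead factor out $\frac{ab}{a+b}$, reduce the claim to strict supermultiplicativity $\psi(a)\psi(b)<\psi(a+b)$ of the one-variable function $\psi(x)=(1-e^{-x})/x$, and derive that from strict convexity of $\ell=\ln\psi$ together with $\ell(0^+)=0$, via the standard fact that $\ell(x)/x$ is then strictly increasing. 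The substantive computation is identical in both proofs — your $\ell''>0\iff\sinh(x/2)>x/2$ is exactly the paper's $2\cosh a-2>a^2$ in disguise — but your framing isolates a cleaner structural statement (log-superadditivity of $\psi$) and replaces the paper's two-variable monotonicity argument with a one-variable convexity argument, which is conceptually tidier and more readily reusable. You were right to flag the boundary point $x=0$ as the only place needing care; the cleanest patch is to observe that $g(x):=x\ell'(x)-\ell(x)$ has $g'(x)=x\ell''(x)>0$ and $g(0^+)=0$ (using $\ell'(0^+)=-\tfrac12$ and $\ell(0^+)=0$), so $g>0$ and hence $(\ell(x)/x)'>0$ directly, avoiding any delicate limit-in-a-strict-inequality step.
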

	\begin{proof} Let $f(a,b):=\frac{1-e^{-(a+b)}}{(1-e^{-a})(1-e^{-b})} - \frac 1a - \frac 1b$. The inequality is proved if we show that $f(a,b)>0$ for all~$a,b>0$. Note that $\lim_{(a,b)\downarrow (0,0)} f(a,b) =0$.
		Differentiating~$f$ with respect to $a$ yields
		$\frac{\partial f}{\partial a}=\frac 1{a^2}+\frac 1{2-2\cosh(a)}.$
		Using the Taylor series of $\cosh(a)$ we   find that $2\cosh(a)-2 > a^2$,
		so~$\partial f/\partial a>0$. Similarly, $\partial f/\partial b>0$. Hence, $f$ increases in all directions in the positive orthant.
	\end{proof}
	%

	\begin{lemma}\label{lem:bound}
		 Consider the scalar systems~$\dot x_i(t) = a_i - b_i x_i(t)$, $i=1,2$, with~$b_i>0$.  
		Suppose that there exist~$v,w \in \R$ and~$t_1,t_2 > 0$ such that
		for~$x_1(0)=v$ and~$x_2(0)=w$, we have~$x_1(t_1)=w$ and~$x_2(t_2)=v$. Then  
		\begin{align}\label{eq:wmv}
		w-v =  (c_1-c_2) \frac{  ( 1-e^{-b_1 t_1} )(1-e^{-b_2 t_2} )   }{ 1-e^{-b_1 t_1-b_2 t_2} } , 
		\end{align}
		where~$c_i:=a_i/b_i$, $i=1,2$. If, furthermore,~$c_1>c_2$ then
		\begin{align}\label{eq:bloy}
		w-v < (c_1-c_2)  b_1 t_1 b_2 t_2  / (b_1 t_1+b_2 t_2 ) . 
		\end{align}
		\end{lemma}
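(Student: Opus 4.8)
The plan is to solve the two scalar linear ODEs in closed form and then eliminate the unknowns $v,w$. Writing $c_i = a_i/b_i$ as in the statement, the solution of $\dot x_i = a_i - b_i x_i$ with $x_i(0) = x_{i,0}$ is $x_i(t) = c_i + (x_{i,0} - c_i)e^{-b_i t}$. Abbreviating $p := e^{-b_1 t_1} \in (0,1)$ and $q := e^{-b_2 t_2} \in (0,1)$, the two coupling conditions $x_1(t_1) = w$ (with $x_1(0)=v$) and $x_2(t_2) = v$ (with $x_2(0)=w$) become the linear system $w = c_1(1-p) + vp$ and $v = c_2(1-q) + wq$ in the unknowns $v,w$.

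Next I would substitute the first equation into the second to solve for $v$, obtaining $v(1-pq) = c_2(1-q) + c_1 q(1-p)$. Subtracting $v$ from $w = c_1(1-p) + vp$ gives the clean identity $w - v = (c_1 - v)(1-p)$, and a short computation of $c_1 - v$ shows its numerator collapses: $c_1(1-pq) - c_1 q(1-p) - c_2(1-q) = (c_1 - c_2)(1-q)$, so that $c_1 - v = (c_1 - c_2)(1-q)/(1-pq)$. Combining these yields $w - v = (c_1-c_2)(1-p)(1-q)/(1-pq)$, which upon reinserting $p = e^{-b_1 t_1}$, $q = e^{-b_2 t_2}$ is exactly~\eqref{eq:wmv}.

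For the strict inequality~\eqref{eq:bloy}, I would observe that when $c_1 > c_2$ the factor $(c_1 - c_2)$ is positive, so it suffices to bound the fraction $(1-e^{-b_1 t_1})(1-e^{-b_2 t_2})/(1 - e^{-b_1 t_1 - b_2 t_2})$ from above. This is precisely the quantity controlled by Lemma~\ref{lem:1} applied with $a = b_1 t_1 > 0$ and $b = b_2 t_2 > 0$, which gives the bound $b_1 t_1 b_2 t_2 / (b_1 t_1 + b_2 t_2)$; multiplying through by the positive constant $c_1 - c_2$ then finishes the argument.

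I do not expect a genuine obstacle: both parts are elementary once the closed-form solutions are written down, and the only slightly delicate point is the algebraic cancellation in the numerator of $c_1 - v$, which I would verify by expanding $c_1(1-pq) - c_1 q(1-p) - c_2(1-q)$ term by term. One thing worth noting is that the derivation simultaneously shows the coupled boundary conditions \emph{determine} $v$ and $w$ uniquely from the data $(c_1,c_2,b_1 t_1, b_2 t_2)$, so~\eqref{eq:wmv} is really an identity rather than a constraint, and~\eqref{eq:bloy} is then its immediate consequence via Lemma~\ref{lem:1}.
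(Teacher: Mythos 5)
Your proposal is correct and follows essentially the same route as the paper: write the closed-form solutions of the two scalar ODEs to get the linear system in $v,w$, eliminate to obtain $w-v = (c_1-c_2)(1-p)(1-q)/(1-pq)$, and then invoke Lemma~\ref{lem:1} with $a=b_1t_1$, $b=b_2t_2$. The only cosmetic difference is that you solve for $v$ and use the shortcut $w-v=(c_1-v)(1-p)$, whereas the paper expands both $w$ and $v$ and subtracts directly; the algebra and the key ingredient are identical.
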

	\begin{proof}
	We know that $w=e^{-b_1 t_1} v +c_1 (1-e^{-b_1 t_1} )$
	and that~$v=e^{-b_2 t_2} w +c_2 (1-e^{-b_2 t_2} )$. Combining these equations yields
	\begin{align*}
	w&=c_1 (1-e^{-b_1 t_1} )+e^{-b_1 t_1} \left( c_2 (1-e^{-b_2 t_2} ) + e^{-b_2 t_2} w \right ),\\
	v&=c_2 (1-e^{-b_2 t_2} ) +e^{-b_2 t_2} \left (  c_1 (1-e^{-b_1 t_1} ) + e^{-b_1 t_1} v  \right ) ,
	\end{align*}
	and this gives~\eqref{eq:wmv}. If~$c_1>c_2$ then Lemma~\ref{lem:1}  yields~\eqref{eq:bloy}.
	\end{proof}

	Going back to our problem, note that the system \eqref{eq:con} with an input $\sigma$ 
	given in~\eqref{control_paramterization} is a
	\emph{switched linear system}~\cite{liberzon_book}
	which switches between three linear systems in the
	form~\rev{$\dot x=a_{z}-b_z x$, $z \in \{-,0,+\}$ (see Fig.~\ref{fig:main})}, 
	with
		\begin{equation}
\label{eq:ci}
	\begin{alignedat}{3}
	a_{-}&:= \bar\sigma -\varepsilon& ,\quad &b_{-}:= \bar \sigma +\lambda-\varepsilon,\\
	a_{0}&:= \bar\sigma   ,&\quad  &b_{0}:= \bar \sigma +\lambda ,\\
	a_{+}&:= \bar\sigma +  \varepsilon & ,\quad  &b_{+}:= \bar \sigma +\lambda + \varepsilon,
	\end{alignedat}
	\end{equation}
	corresponding to~$\alpha(t)=-1$, $0$, and~$1$, respectively. 
	We refer to the corresponding arcs as a~$z$-arc, with~$z\in\{- ,0, +\}$.  
	Note that~$b_+>b_0>b_{-}>0$. 
	Letting~$c_i:=a_i/b_i$,  we also have
	\begin{align}\label{eq:c1c2}
	c_+-c_{- }
	&= \frac{2 \lambda \varepsilon} { ( \bar \sigma +\lambda+\varepsilon ) ( \bar \sigma +\lambda-\varepsilon ) } >0.  
	\end{align}

Recall that along a $0$-arc the solution satisfies~$x(t)\equiv  c_0  $.  \rev{
Since~$x(T)=x(0)$, the other arcs form a  loop with a finite number of sub-loops. Observe that the $(+)$-arcs and the $(-)$-arcs can be paired:  if a $(+)$-arc traverses from~$x(t)=x_i^-$ to~$x(t+t^+_i)=x_i^+$, then there must be a~$(-)$-arc     that  traverses back from~$x_i^+$ to~$x_i^-$ (see  Fig.~\ref{fig:main}). Hence any trajectory can be partitioned into $n$ arcs with $x_0^-,x_0^+,x_1^-,x_1^+,...,x_n^-,x_n^+>0$ as the switching points.  Let $t_i^+$ [$t_i^-$] be the time spent on  the $i$'th  $(+)$- arc [$(-)$-arc], respectively.  }
Combining this with the assumption that~$\alpha(t)$
has a finite number of switches and~\eqref{eq:intx} implies that
\begin{align*}
\int _0^T x(t)\diff t &=  \int_{t\in E^0} x(t)\diff t+ \int_{t\in E^+} x(t)\diff t+ \int_{t\in E^{-}} x(t)\diff t\\
	&= c_0 \mu(E^0)  + \sum_{i=1}^{n} \left ( 
	c_{+} t_i^+ +(x_i^- - x_i^+ )/b_{+} \right ) \\
	& + \sum_{i=1}^{n} \left( c_{- } t_i^- +( x_i^+- x_i^- )/b_{-}\right) .
	\end{align*}
		Thus,
\begin{align*}
\int _0^T x(t)\diff t 
	&= c_0 \mu(E^0)   +c_+ \mu(E^+)   +c_- \mu(E^-) \\
   &+   (\frac{1}{b_-} -\frac{1}{b_+}  )   \sum_{i=1}^{n} (x_i^+-x_i^- )  .
	\end{align*}
	It follows from~\eqref{eq:bloy} that 
$
		x_i^+-x_i^-  < (c_+ - c_-) \frac{ b_+ t_i^+ b_- t_i^-  }{b_+ t_i ^+ + b_- t_i^- } , 
$
	so 
	 \begin{align}\label{eq:poute}
\int _0^T x(t)\diff t 
	&= c_0 \mu(E^0)  +c_+ \mu(E^+)   +c_- \mu(E^-)\nonumber \\
   &+     (c_+ - c_-)  ( {b_+} - {b_-}  ) 
	\sum_{i=1}^n \frac{  t_i^+  t_i^-  }{ b_+ t_i ^+ + b_- t_i^- } .
	\end{align}
Let~$t_i:=t_i^+ +t_i^-$ and~$\delta_i:=t_i^+-t_i^-$. Then
\begin{align}\label{eq:macc}
\frac{  t_i^+  t_i^-  }{ b_+ t_i ^+ +  b_- t_i^- } &=\frac{  (t_i+\delta_i)(t_i-\delta_i)  } { 4(b_0 t_i+\varepsilon \delta_i  )}\nonumber  \\ 
&=\frac{t_i^2-\delta_i^2}{4b_0 (  t_i+(\varepsilon \delta_i/b_0 ) )} \nonumber \\
& < \frac{1}{4b_0}(t_i- (\varepsilon \delta_i/b_0 ) ), 
\end{align}
where the last inequality follows from the fact that~$b_0^2=(\bar \sigma+\lambda)^2>\varepsilon^2$. 
Thus,
\begin{align*}
\sum_{i=1}^n \frac{   t_i^+   t_i^-  }{b_+  t_i ^+ + b_- t_i^- }&< 
\frac{1}{4b_0} \sum_{i=1}^n t_i\\
&=   (\mu(E^-)+\mu (E^+))/(4 b_0),
\end{align*}
and plugging this in~\eqref{eq:poute} yields 
	 \begin{align}\label{eq:ecier}
\int _0^T x(t)\diff t 
	&< c_0 \mu(E^0)  +c_+ \mu(E^+)   +c_- \mu(E^-)  \\
   &+      (c_+ - c_-)  ( {b_+} - {b_-}  ) 
	 (\mu(E^-)+\mu (E^+))/(4b_0).\nonumber
	\end{align}
Since~$\ave(\alpha)=0$,  $\mu(E^+)-\mu(E^-)=0$, and combining this with the fact that~$\mu(E^+)+\mu(E^-)+\mu(E^0)=T$ yields
\[
\mu(E^+)=\mu(E^-)=(T-\mu(E^0)) /2.
\]
Plugging  this and~\eqref{eq:ci} in~\eqref{eq:ecier} and simplifying 
yields 
$
  \int_0^T x(t) \diff t< \frac{ \bar \sigma}{\bar \sigma +\lambda}T , 
$
and this completes the proof of Prop.~\ref{th.finite}.

We note in passing that one advantage of our explicit approach is that 
by using a more exact analysis in~\eqref{eq:macc} it is possible to derive 
exact results on the ``loss'' incurred by using a non-constant inflow.

	\setlength{\unitlength}{0.22mm}
	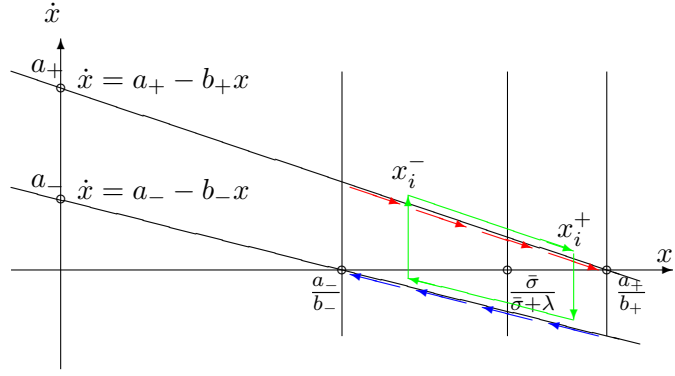
\begin{figure}[t]
		\centering
		\begin{picture}(400,230)
		\put(0, 60){\vector(1,0){400}}
		\put(30, 0){\vector(0,1){200}}
		
		\put(390,65){$x$}
		\put(20,210){$\dot x$}
		\put(230,115){$x^-_i$}
		\put(330,80){$x^+_i$}
		
		\put(0, 110){\line(4,-1){380}}
		\put(30,103){\circle{5}}
		\put(0, 180){\line(3,-1){380}}
		\put(30,170){\circle{5}}
		
		\put(200, 20){\line(0,1){160}}
		\put(300, 20){\line(0,1){160}}
		\put(360, 20){\line(0,1){160}}
		\put(300,60){\circle{5}}
		\put(200,60){\circle{5}}
		\put(360,60){\circle{5}}
		
		\put(300,42){$\frac{\bar \sigma}{\bar \sigma+ \lambda}$}
		\put(365,42){$\frac{a_+}{b_+}$}
		\put(180,42){$\frac{a_-}{b_-}$}
		\put(12,180){$a_+$}
		\put(12,110){$a_-$}
		\put(40,170){$\dot x = a_+ - b_+ x$}
		\put(40,103){$\dot x = a_- - b_- x$}
		
		\color{blue}
		\put(355, 20){\vector(-4,1){30}}
		\put(315, 30){\vector(-4,1){30}}
		\put(275, 40){\vector(-4,1){30}}
		\put(235, 50){\vector(-4,1){30}}
		
		\color{red} 
		\put(205, 110){\vector(3,-1){30}}
		\put(245, 96){\vector(3,-1){30}}
		\put(285, 83){\vector(3,-1){30}}
		\put(325, 70){\vector(3,-1){30}}
		
		\color{green} 
		\put(240, 105){\vector(3,-1){100}}
		\put(340, 70){\vector(0,-1){40}}
		\put(340, 30){\vector(-4,1){100}}
		\put(240, 55){\vector(0,1){50}}
		
		\end{picture}
		\caption{A trajectory of the switched system with $x(0)=x(T)$ traverses a loop in the $(\dot x,x)$-plane. 
		The upper [lower] line corresponds to~$\alpha(t)=1$ [$\alpha(t)=-1$].
		There is no line for the case $\alpha(t)=0$ since  we assume that the corresponding trajectory stays at a fixed point in the~$(x,\dot x)$ plane.}
		\label{fig:main}
	\end{figure}
	 
\subsection{Arbitrary Switchings}
	For two sets~$A,B$ let~$A \Delta B:=(A\setminus B)\cup (B\setminus A)$ denote the symmetric difference of the sets.
To prove Thm.~\ref{th1}, we need to consider a 
	measurable signal~$\alpha(t)$ in~\eqref{control_paramterization}.
	We use the following characterization of measurable sets:
\begin{lemma}\cite{kolmogorov} Let $E \subset [0,T]$. Then $E$ is measurable if and only if for every $\varepsilon>0$ there exists an elementary set $B_\varepsilon\subset [0,T]$ such that $\mu(E \Delta B_\varepsilon)< \varepsilon$. \label{kolm}
\end{lemma}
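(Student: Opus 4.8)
The statement is the classical characterization of Lebesgue measurable subsets of $[0,T]$ as exactly those that are approximable in symmetric difference by elementary sets, and the plan is to establish the two implications separately from the standard regularity properties of Lebesgue measure; throughout, whenever $E$ is not yet known to be measurable I read $\mu(E\Delta B)$ as the outer measure $\mu^*(E\Delta B)$. For the ``only if'' direction, assume $E$ is measurable and fix $\varepsilon>0$. Outer regularity supplies an open set $U\supseteq E$ with $\mu(U\setminus E)<\varepsilon/2$, where subtraction of measures is legitimate because $\mu(E)\le T<\infty$. Every open subset of $\mathbb R$ is a countable disjoint union of open intervals, $U=\bigcup_{j\ge1}I_j$, and since $\mu(U)<\infty$ the series $\sum_j\mu(I_j)$ converges, so I pick $N$ with $\sum_{j>N}\mu(I_j)<\varepsilon/2$ and set $B_\varepsilon:=[0,T]\cap\bigcup_{j\le N}I_j$, which is elementary. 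Because $E\subseteq[0,T]\cap U$, one has $E\Delta B_\varepsilon\subseteq(U\setminus E)\cup\bigcup_{j>N}I_j$, and hence $\mu(E\Delta B_\varepsilon)<\varepsilon$.

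For the ``if'' direction, for each $k\in\mathbb N$ choose an elementary (hence measurable) set $B_k$ with $\mu^*(E\Delta B_k)<2^{-k}$, and put $C_m:=\bigcup_{k\ge m}B_k$ and $C:=\bigcap_{m\ge1}C_m$, both Borel and therefore measurable. From $E\setminus C_m=\bigcap_{k\ge m}(E\setminus B_k)\subseteq E\Delta B_m$ and $C\setminus E\subseteq C_m\setminus E\subseteq\bigcup_{k\ge m}(B_k\Delta E)$, monotonicity and countable subadditivity of $\mu^*$ give $\mu^*(E\setminus C_m)<2^{-m}$ for every $m$, so $E\setminus C=\bigcup_m(E\setminus C_m)$ is a countable union of null sets and hence null, and they also give $\mu^*(C\setminus E)<2^{-m+1}$ for every $m$, so $\mu^*(C\setminus E)=0$. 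Thus $E\Delta C$ is a null set; by completeness of Lebesgue measure it is measurable, and therefore $E=C\,\Delta\,(E\Delta C)$ is measurable.

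The proof reduces to routine measure estimates, and I do not expect a genuine obstacle — this is a textbook fact (cf.\ \cite{kolmogorov}). The points that merit a little care are: the subtraction $\mu(U\setminus E)=\mu(U)-\mu(E)$, which relies on $\mu(E)<\infty$ (true since $E\subseteq[0,T]$); the fact that in the converse one is restricted to monotonicity and countable subadditivity of $\mu^*$, since $E$ is not a priori measurable and additivity is unavailable; and the invocation of completeness of Lebesgue measure to pass from ``$E\Delta C$ is null'' to ``$E\Delta C$ is measurable''. The structural heart of the forward direction is the decomposition of open subsets of $\mathbb R$ into countably many intervals together with convergence of the length series, which is precisely what converts a countable approximant into a finite, elementary one.
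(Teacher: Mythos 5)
Your proof is correct, but note that the paper does not prove this lemma at all: it is stated with a citation to Kolmogorov--Fomin, where the approximation property $\mu^*(E\Delta B)<\varepsilon$ by elementary $B$ is essentially taken as the \emph{definition} of Lebesgue measurability on a bounded interval, and the equivalence with the Carath\'eodory/outer-regularity formulation is the textbook content being invoked. So what you have supplied is a self-contained proof of that standard equivalence, and both directions are sound: the forward direction via outer regularity plus truncation of the interval decomposition of the open cover, and the converse via the Borel set $C=\bigcap_m\bigcup_{k\ge m}B_k$ together with completeness of Lebesgue measure. One small expository gap in the converse deserves a sentence: the bound $\mu^*(E\setminus C_m)<2^{-m}$ does not by itself say that $E\setminus C_m$ is null, so ``countable union of null sets'' needs the extra observation that the sets $E\setminus C_m$ \emph{increase} in $m$ (since the $C_m$ decrease), whence $\mu^*(E\setminus C_j)\le\mu^*(E\setminus C_m)<2^{-m}$ for all $m\ge j$ and each $E\setminus C_m$ is indeed null. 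With that line added, the argument is complete; also be aware that the refinement the paper actually needs and proves itself is Lemma~\ref{th.elementary}, which additionally arranges $\mu(B_\varepsilon)=\mu(E)$ by perturbing the endpoints of the finitely many intervals of $B_\varepsilon$ -- your construction is compatible with that adjustment.
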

We improve on the lemma above, by the following:
\begin{lemma}\label{th.elementary} Let $E \subset [0,T]$. Then $E$ is measurable if and only if for every $\varepsilon>0$ there exists an elementary set $B_\varepsilon\subset [0,T]$ with $\mu(B_\varepsilon)=\mu(E)$ such that $\mu(E \Delta B_\varepsilon)< \varepsilon$.
\end{lemma}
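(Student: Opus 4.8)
The plan is to prove Lemma~\ref{th.elementary} by starting from the standard approximation given in Lemma~\ref{kolm} and then correcting the resulting elementary set so that its measure matches $\mu(E)$ exactly, while keeping the symmetric difference small. First I would fix $\varepsilon>0$ and apply Lemma~\ref{kolm} with parameter $\varepsilon/2$ to obtain an elementary set $A\subset[0,T]$ with $\mu(E\Delta A)<\varepsilon/2$. Since $|\mu(A)-\mu(E)|\le\mu(E\Delta A)<\varepsilon/2$, the measures already agree up to $\varepsilon/2$, so only a small adjustment is needed: set $d:=\mu(A)-\mu(E)$, so $|d|<\varepsilon/2$.

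The adjustment step splits into two cases. If $d>0$, then $A$ is ``too big'' and I would remove a sub-piece of $A$ of measure exactly $d$: since $A$ is a finite union of intervals with total length $\mu(A)\ge d$, one can carve out from within $A$ an elementary subset $C\subset A$ with $\mu(C)=d$ (e.g.\ trim one of the constituent intervals, or combine several). Put $B_\varepsilon:=A\setminus C$, which is again elementary, has $\mu(B_\varepsilon)=\mu(A)-d=\mu(E)$, and satisfies $E\Delta B_\varepsilon\subset (E\Delta A)\cup C$, hence $\mu(E\Delta B_\varepsilon)\le\mu(E\Delta A)+\mu(C)<\varepsilon/2+\varepsilon/2=\varepsilon$. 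If $d<0$, then $A$ is ``too small''; here I would add to $A$ an elementary set $C\subset[0,T]\setminus A$ with $\mu(C)=-d=|d|$, which is possible provided $\mu([0,T]\setminus A)\ge|d|$. This holds because $\mu([0,T]\setminus A)=T-\mu(A)\ge T-\mu(E)-\varepsilon/2=\mu([0,T]\setminus E)-\varepsilon/2$; one may assume without loss of generality that $\mu(E)<T$ (the case $\mu(E)=T$ forces $A=[0,T]$ up to the approximation and is handled directly, or one simply shrinks $\varepsilon$), so there is enough room. Then $B_\varepsilon:=A\cup C$ is elementary, $\mu(B_\varepsilon)=\mu(A)+|d|=\mu(E)$, and $E\Delta B_\varepsilon\subset(E\Delta A)\cup C$ gives $\mu(E\Delta B_\varepsilon)<\varepsilon$ as before. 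The converse direction is immediate: the strengthened condition trivially implies the hypothesis of Lemma~\ref{kolm}, which already characterizes measurability.

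The main obstacle, and the only place requiring care, is the $d<0$ case: one must verify there is always enough ``room'' in the complement to enlarge $A$ by the small deficit $|d|$ without overshooting or leaving $[0,T]$. This is where the trivial case $\mu(E)=T$ must be segregated, and where the bound $\mu(E\Delta A)<\varepsilon/2$ (rather than $<\varepsilon$) is used to guarantee $|d|<\varepsilon/2<\mu([0,T]\setminus E)$ once $\varepsilon$ is taken smaller than $2\mu([0,T]\setminus E)$. Everything else is routine finite-interval bookkeeping: finite unions of intervals are closed under complementation within $[0,T]$, under finite unions, and under removing sub-intervals, and a finite union of intervals of positive total length contains sub-collections of intervals realizing any intermediate length by the intermediate value theorem applied to the length-so-far function.
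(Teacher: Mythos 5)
Your proof is correct and follows essentially the same route as the paper: apply Lemma~\ref{kolm} with $\varepsilon/2$ and then adjust the resulting elementary set by adding or deleting an elementary piece of measure $|d|$ to match $\mu(E)$ exactly, at the cost of an extra $\varepsilon/2$ in the symmetric difference. (One small simplification: your worry in the $d<0$ case is unnecessary, since $\mu([0,T]\setminus A)=(T-\mu(E))+|d|\ge|d|$ holds unconditionally, so there is always enough room and no case split on $\mu(E)=T$ or shrinking of $\varepsilon$ is needed.)
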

\begin{proof}
	Sufficiency is clear. To prove necessity, pick~$\varepsilon>0$. 
	By Lemma~\ref{kolm}, there exists an elementary
	set~$B_{\varepsilon/2} \subset [0,T]$ 
	such that~$\mu(E \Delta B_{\varepsilon/2})< \varepsilon/2$. 
	  We can modify the intervals contained in $B_{\varepsilon/2}$ 
		by up to $\varepsilon/2$ to get $B_\varepsilon $ with $\mu(B_\varepsilon)=\mu(E)$
		and~$\mu(E \Delta B_{\varepsilon })< \varepsilon $.
\end{proof}

We generalize Prop.~\ref{th.finite} as follows:
\begin{proposition}\label{th.arbitrary}
	If $\sigma(t)$ is given as in~\eqref{control_paramterization} with 
	  $\alpha(t)$   measurable then~$J(\sigma ) \le 1$.
\end{proposition}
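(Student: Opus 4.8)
The plan is to upgrade Proposition~\ref{th.finite} from finitely many switchings to arbitrary measurable~$\alpha$ by a density/approximation argument, using continuous dependence of the periodic orbit on the control. Concretely, given a measurable~$\alpha$ with $\ave(\alpha)=0$ and $\alpha(t)\in\{-1,0,1\}$ a.e., I would approximate the three level sets $E^+,E^-,E^0$ simultaneously by elementary sets. The role of Lemma~\ref{th.elementary} is exactly this: it lets me pick elementary approximants that \emph{preserve the measure}, which is what is needed so that the approximating control still satisfies the zero-average constraint $\mu(E^+)=\mu(E^-)$ and thus $\ave(\alpha_k)=0$, keeping $\sigma_k\in B_{\bar\sigma,T}$. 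I would construct, for each $k$, elementary sets $B_k^+,B_k^-$ with $\mu(B_k^+)=\mu(E^+)$, $\mu(B_k^-)=\mu(E^-)$, pairwise disjoint (shrink slightly if needed), and $\mu(E^\pm\Delta B_k^\pm)\to 0$; set $\alpha_k=+1$ on $B_k^+$, $-1$ on $B_k^-$, $0$ elsewhere, so $\alpha_k$ has finitely many switchings, $\ave(\alpha_k)=0$, and $\alpha_k\to\alpha$ in $L^1[0,T]$ (hence a.e.\ along a subsequence).

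Next I would invoke continuity of the map $\sigma\mapsto x_\sigma$ from controls (in $L^1$) to periodic solutions. Since \eqref{eq:con} is a scalar affine ODE with the periodic orbit given by the closed-form fixed point $x_\sigma(0)=\big(\int_0^T e^{-\int_s^T(\lambda+\sigma(r))dr}\sigma(s)\,ds\big)\big/\big(1-e^{-\int_0^T(\lambda+\sigma(r))dr}\big)$, and all controls are uniformly bounded ($0<\sigma\le 2\bar\sigma$ say), standard Gronwall/dominated-convergence estimates give $\|x_{\sigma_k}-x_\sigma\|_\infty\to 0$, and therefore $\ave(\lambda x_{\sigma_k})\to\ave(\lambda x_\sigma)$, i.e.\ $J(\sigma_k)\to J(\sigma)$. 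Combining with Proposition~\ref{th.finite}, which gives $J(\sigma_k)<1$ for every $k$ (here I need $\mu(B_k^0)<T$, i.e.\ $\varepsilon>0$ and not $\alpha\equiv 0$; the degenerate cases $\varepsilon=0$ or $\alpha\equiv 0$ give $\sigma\equiv\bar\sigma$ and $J=1$ trivially), passing to the limit yields $J(\sigma)\le 1$.

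One subtlety to handle carefully: for the limit control $\sigma$ itself one must check it lies in $B_{\bar\sigma,T}$, in particular the ``$\sigma(t)=\bar\sigma$ implies $x(t)=\bar\sigma/(\lambda+\bar\sigma)$'' constraint; but this is automatic for any admissible $\sigma$ we are given, since the proposition is stated for $\sigma$ given by \eqref{control_paramterization} with measurable $\alpha$, and Proposition~\ref{th.finite}'s conclusion does not actually use the constraint on $E^0$ beyond $\mu(E^0)<T$ — so the approximants need only satisfy $\ave(\alpha_k)=0$ and $\alpha_k\in\{-1,0,1\}$, which the construction guarantees. The main obstacle, and the reason Lemma~\ref{th.elementary} (rather than the textbook Lemma~\ref{kolm}) is needed, is precisely the measure-preservation: a naive elementary approximation would perturb $\mu(E^+)-\mu(E^-)$ away from zero, breaking $\ave(\alpha_k)=\bar\sigma$ and pushing $\sigma_k$ outside the feasible set, so the finitely-switched estimate could not be applied cleanly. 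Everything else is routine: disjointness of the approximants can be arranged by intersecting with complements, and the continuity estimate is elementary for this scalar linear system.
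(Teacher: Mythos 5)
Your proposal is correct and takes essentially the same route as the paper: approximate the level sets by measure-preserving elementary sets via Lemma~\ref{th.elementary}, apply Proposition~\ref{th.finite} to the resulting finitely-switching controls, and pass to the limit (the paper invokes Lebesgue's bounded convergence theorem where you spell out the $L^1$-continuity of $\sigma\mapsto x_\sigma$). You also flag the disjointness of the approximating sets, a technicality the paper glosses over, but the core argument is the same.
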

\begin{proof} 
  Let $E^0,E^+,E^-$ be defined as before. \rev{ For any~$i\geq 1$, let $\varepsilon_i:=2^{-i}$. By  Lemma~\ref{th.elementary},
	there exist elementary
	sets~$E_i^0, E_i^+,E_i^-$     such that 
	$\mu(E_i^0)=\mu(E^0)$
	and $\mu(E^0 \Delta E_i^0)<\varepsilon_i=2^{-i}$}, and similarly 
	for~$E_i^+, E_i^-$. Note that this implies that~$\mu(E_i^0)+\mu(E_i^+)+\mu(E_i^-)=T$. For every~$i\geq 1$
	define 
	\[\alpha_i(t): =  \begin{cases}
	1,& \text{ if } t \in E_i^+, \\
	0, & \text{ if } t \in E_i^0, \\
	-1 ,& \text{ if } t \in E_i^-. \end{cases}
	\]
	Then $\alpha_i(t) $ are elementary simple functions, 
	and~$\lim_{i\to\infty} \alpha_i(t) = \alpha(t)$ for all~$t\in[0,T]$. 
	For each~$i$, we can apply Prop.~\ref{th.finite} to the periodic
	solution~$x_{\alpha_i}$.
	Now the proof 
	follows  from Lebesgue's bounded convergence theorem~\cite{kolmogorov}.
\end{proof}

This completes the proof of Thm.~\ref{th1}.

\section{ Periodic Gain of the  Cascade System}\label{sec:casc}
We have shown  above that a constant inflow
outperforms any switched inflow 
 for the bottleneck system which is the first block in Fig.~\ref{f.cascade}.
We now show that the result can be generalized if we have a positive
linear system after the bottleneck. We first show that the periodic gain of a Hurwitz 
linear system does not depend on the particular periodic signal, but only on the DC gain of the system.

\begin{proposition} \label{prop:9}
	Consider a    SISO Hurwitz  linear system~\eqref{linear_system}. 
	Let~$w$ be a bounded measurable~$T$-periodic input.
	Recall that the output converges to a steady-state~$y_w$
	that is~$T$-periodic. Then
	\[
	 \frac{1}{T} \int_0^T y_w(t) \diff t
	= H(0) \frac{1}{T} \int_0^T  w(t) \diff t  ,
	\]
	where $H(s):=c^T(sI-A)^{-1}b$ is the transfer function of the linear system.
\end{proposition}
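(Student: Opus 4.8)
The plan is to exploit the fact that averaging a $T$-periodic signal kills all nonzero Fourier frequencies, so that the only thing that survives the linear dynamics is the response at frequency zero, which is precisely the DC gain $H(0)$. Concretely, I would first record that since $w$ is bounded, measurable, and $T$-periodic, and $A$ is Hurwitz, the system $\dot z = Az + bw$ has a unique globally attracting $T$-periodic solution $z_w$, and the associated periodic output is $y_w = c^T z_w$; this is standard for exponentially stable linear systems driven by bounded periodic inputs (and is already invoked in the statement).

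The main computation I would carry out is to integrate the state equation over one period. Writing $\bar z := \frac1T\int_0^T z_w(t)\,dt$ and $\bar w := \frac1T\int_0^T w(t)\,dt$, integrating $\dot z_w(t) = A z_w(t) + b w(t)$ over $[0,T]$ gives
\[
z_w(T) - z_w(0) = A \int_0^T z_w(t)\,dt + b\int_0^T w(t)\,dt .
\]
By $T$-periodicity $z_w(T) = z_w(0)$, so the left side vanishes and we obtain $0 = A\,(T\bar z) + b\,(T\bar w)$, i.e. $A\bar z + b\bar w = 0$. Since $A$ is Hurwitz it is invertible, so $\bar z = -A^{-1} b\,\bar w$. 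Applying $c^T$ and using $H(0) = c^T(0\cdot I - A)^{-1} b = -c^T A^{-1} b$ yields
\[
\frac1T\int_0^T y_w(t)\,dt = c^T \bar z = -c^T A^{-1} b\,\bar w = H(0)\,\frac1T\int_0^T w(t)\,dt ,
\]
which is exactly the claimed identity. Note that positivity of the system (Metzler $A$, nonnegative $b,c$) is not actually needed for this proposition — only that $A$ is Hurwitz, hence invertible — although positivity is what makes the cascade result in the next section meaningful.

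There is essentially no hard obstacle here; the only point requiring a small amount of care is the justification that $z_w$ is absolutely continuous (so that the fundamental theorem of calculus applies to $\int_0^T \dot z_w$) and that the interchange of $A$ with the integral is legitimate — both are immediate since $z_w$ solves a linear ODE with bounded measurable forcing, hence is Lipschitz on $[0,T]$, and $A$ is a constant matrix pulled out of the integral by linearity. One could alternatively phrase the argument via Fourier series: expand $w(t) = \sum_k \hat w_k e^{i k \omega t}$ with $\omega = 2\pi/T$, note $y_w(t) = \sum_k H(i k\omega)\hat w_k e^{ik\omega t}$, and observe that averaging over a period extracts only the $k=0$ term, giving $\hat y_0 = H(0)\hat w_0$; but the integrate-the-ODE argument above is cleaner and avoids convergence bookkeeping for the Fourier series.
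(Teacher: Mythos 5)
Your proof is correct, and it takes a genuinely different — and arguably cleaner — route than the paper. The paper expands the bounded periodic input $w$ in a Fourier series, passes each sinusoidal component through the frequency response $H(j\omega)$, and observes that integrating over a period annihilates every term except the DC component, which picks up a factor $H(0)$. Your argument instead integrates the state equation $\dot z_w = A z_w + b w$ over one period, uses $z_w(T)=z_w(0)$ to kill the boundary terms, solves $A\bar z + b\bar w = 0$ for $\bar z = -A^{-1}b\,\bar w$ (invertibility following from Hurwitzness), and applies $c^T$. The integrate-the-ODE argument avoids the convergence bookkeeping that the Fourier route implicitly requires — the paper's step from the Fourier series of $w$ to the term-by-term frequency response of $y_w$ is not spelled out, whereas your approach needs only absolute continuity of $z_w$, which is immediate. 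You also correctly note that positivity of $(A,b,c)$ plays no role in this particular proposition. In short, you arrive at the same identity by a more elementary state-space computation rather than a frequency-domain decomposition; both are valid, and you even flag the Fourier alternative yourself.
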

\rev{Thus, the periodic gain of a   linear system is the
same for any input.}

\begin{proof}
 Since $w$ is measurable and bounded,~$w \in L_2 ([0,T])$. Hence, it can be written as a Fourier series~\[
	w(t)=\ave( w)+ \sum_i a_i \sin (\omega_i  t +\phi_i ). 
	\]
	 The output of the linear system
	converges to:
$
	y_w(t)=    H(0) \ave(w)  +
		\sum_i  |H(j\omega_i)| a_i  \sin (\omega_i  t +\phi_i  + \mbox{arg}(H(j\omega_i) )).
		$
		Each sinusoid in the expansion has period~$T$, so
	$ \int_0^T y_w(t) \diff t = T  H(0) \ave( w ).$
\end{proof}

If~$H(0)=-c^TA^{-1}b\geq 0$ (e.g. when the linear system is positive) then 
and we can extend Thm.~\ref{th1} to the cascade in Fig.~\ref{f.cascade}:
\begin{theorem} \label{th:final}
	Consider the cascade of \eqref{eq:con} and~\eqref{linear_system}   in Fig.~\ref{f.cascade}.  If $\sigma \in B_{\bar \sigma,T}$  then $\frac 1T \int_0^T y_\sigma (t) \diff t  \le \tfrac{\lambda \bar\sigma}{\lambda+\bar \sigma} H(0).$
\end{theorem}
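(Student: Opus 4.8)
The plan is to obtain the cascade bound by simply composing the two results already in hand: Theorem~\ref{th1} controls the bottleneck block, and Proposition~\ref{prop:9} converts an average at the input of the linear block into an average at its output. The cascade structure means the bottleneck output $w_\sigma(t)=\lambda x_\sigma(t)$ is precisely the input driving the linear system, so nothing beyond this chaining should be required.

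First I would record the regularity facts needed to apply Proposition~\ref{prop:9}. For any admissible $\sigma\in B_{\bar\sigma,T}$ the cascade entrains (as noted after~\eqref{linear_system}), so $x_\sigma$ is the unique $T$-periodic solution of~\eqref{eq:con} with values in $[0,1]$; hence $w_\sigma=\lambda x_\sigma$ is a bounded, measurable, $T$-periodic signal, which is exactly the class of inputs covered by Proposition~\ref{prop:9}, and the entrained output of the cascade is $y_\sigma=y_{w_\sigma}$. Applying Proposition~\ref{prop:9} with $w=w_\sigma$ then gives
\[
\frac1T\int_0^T y_\sigma(t)\,\diff t \;=\; H(0)\,\frac1T\int_0^T w_\sigma(t)\,\diff t \;=\; H(0)\,\ave(\lambda x_\sigma).
\]
Next I would invoke Theorem~\ref{th1}: since $J(\sigma)\le 1$ by that theorem and $J(\sigma)=\ave(\lambda x_\sigma)\big/\tfrac{\lambda\bar\sigma}{\lambda+\bar\sigma}$, we obtain $\ave(\lambda x_\sigma)\le \tfrac{\lambda\bar\sigma}{\lambda+\bar\sigma}$.

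Finally I would close the argument by multiplying this inequality by $H(0)$, which is legitimate because $H(0)\ge 0$: indeed $A$ is Hurwitz and Metzler, so $-A^{-1}$ is entrywise nonnegative, and with $b,c\in\R^n_+$ this yields $H(0)=-c^TA^{-1}b\ge 0$. Combining with the displayed identity gives $\tfrac1T\int_0^T y_\sigma(t)\,\diff t\le \tfrac{\lambda\bar\sigma}{\lambda+\bar\sigma}H(0)$, as claimed. I do not expect a genuine obstacle here: the only nonroutine point is the sign of $H(0)$ — which is exactly why the positivity (Metzler $A$, nonnegative $b,c$) hypothesis on the linear block is imposed — and everything else is a direct composition of Theorem~\ref{th1} and Proposition~\ref{prop:9}.
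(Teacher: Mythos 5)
Your proposal is correct and is precisely the argument the paper intends: the paper states Theorem~\ref{th:final} without a displayed proof, leaving it as the immediate composition of Proposition~\ref{prop:9} (which gives $\tfrac1T\int_0^T y_\sigma\,\diff t = H(0)\,\ave(\lambda x_\sigma)$ for the bounded measurable $T$-periodic signal $w_\sigma=\lambda x_\sigma$), Theorem~\ref{th1} (which gives $\ave(\lambda x_\sigma)\le \tfrac{\lambda\bar\sigma}{\lambda+\bar\sigma}$), and the sign fact $H(0)=-c^TA^{-1}b\ge 0$ from the Metzler--Hurwitz structure of $A$ and nonnegativity of $b,c$. You have filled in exactly those steps, including the one nontrivial point (nonnegativity of $H(0)$) that the paper flags in the sentence preceding the theorem.
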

Thus, a constant inflow cannot be outperformed by a switched inflow. 
	\section{Conclusion}
	We analyzed the performance of a switching control for a specific type of~SISO
	nonlinear system that is relevant  in fields such as
	traffic control  and molecular biology. 
	
	\begin{figure}\centering
		\begin{minipage}[b]{0.59\columnwidth}		\centering
			\includegraphics[width=\columnwidth,height=3.5cm]{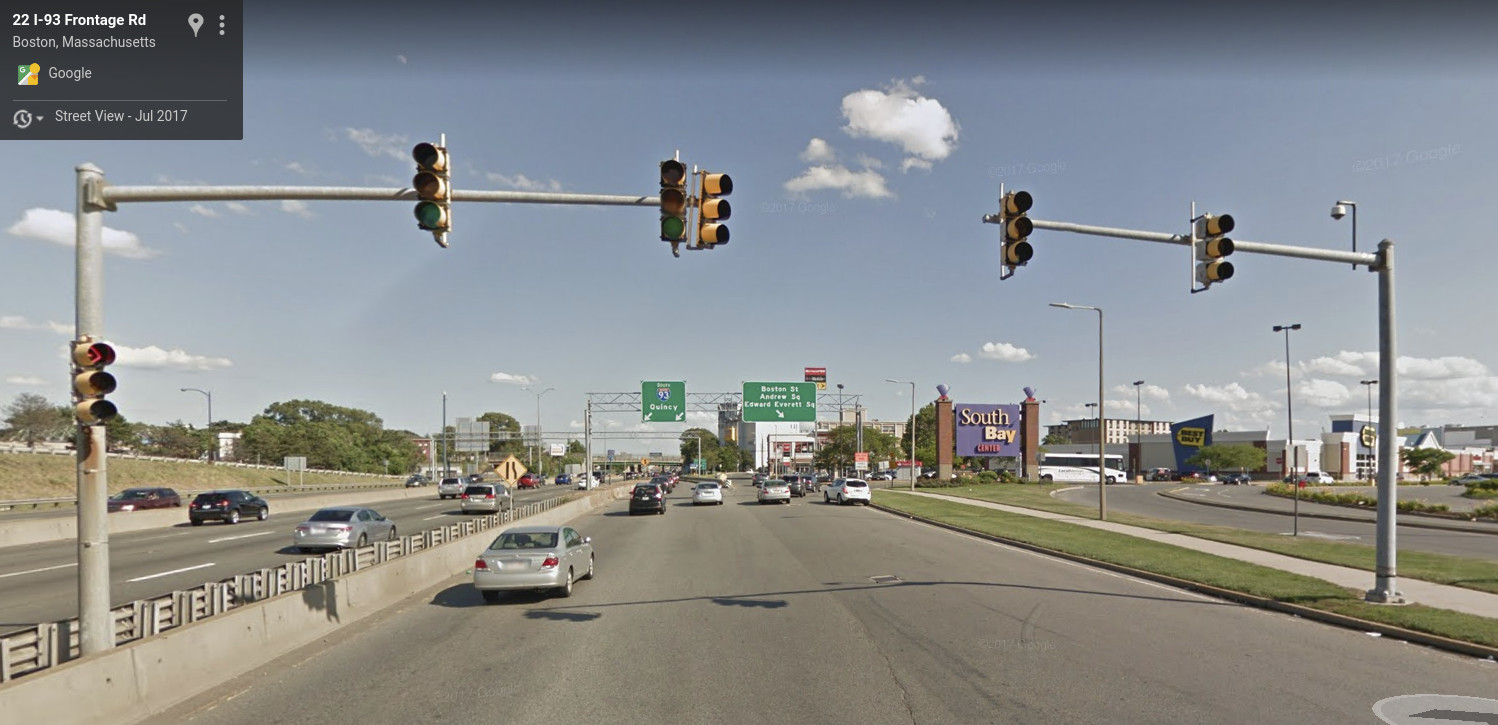} \\
			(a)
		\end{minipage}
	\begin{minipage}[b]{0.39\columnwidth}		\centering
		\includegraphics[width=\columnwidth, height=3.5cm ]{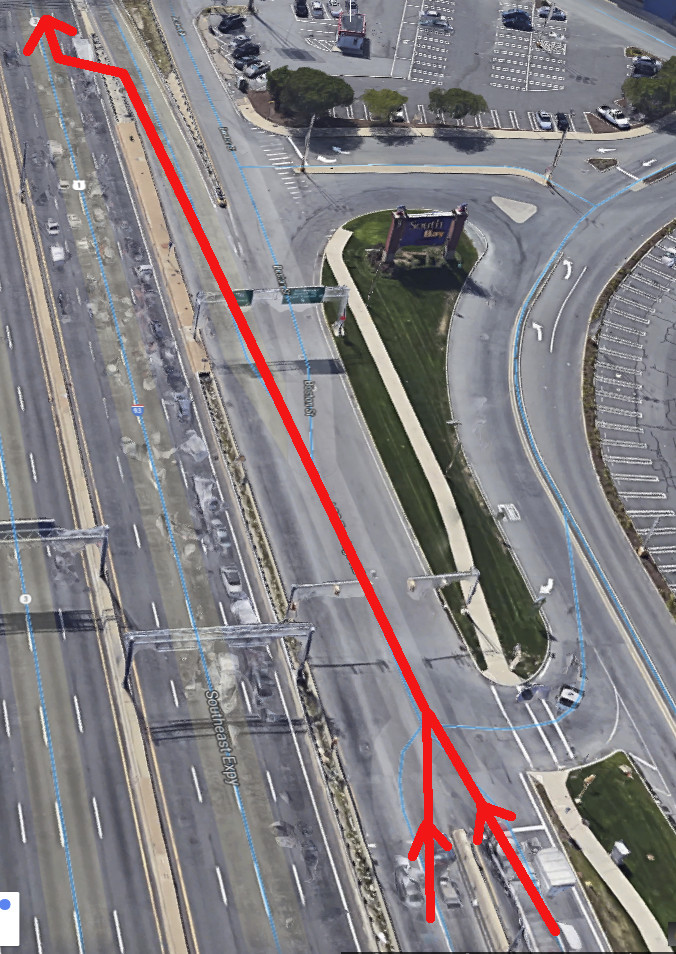} \\
		(b)
	\end{minipage}
\caption{Example of the switching design in Fig.~\ref{f.traffic}(a) 
located at an i-93 entrance in Boston, MA (42$^{\circ}$19'46.8"N 71$^{\circ}$03'39.7"W). (a)~The two traffic lights. (b)~The path taken by vehicles to join the i-93 road. (Copyright: Google Earth)}
\label{f.google}
\end{figure}
	We have shown
	that a constant inflow outperforms switching inflows with the same average. The performance of the switching inflow can be enhanced by faster switching, but this is not practical in most
	applications (e.g. traffic systems). Nevertheless, the switching inflow can be found in the real world (Fig.~\ref{f.google}), and our analysis 
	 shows that, for our model, such design involves
	a deterioration in the throughput of the system.
	
The periodic gain may certainly be larger than
	one for some nonlinear  systems and then  \rev{nontrivial}
	periodic inflows are better than the constant inflow. 
 For example, given the bottleneck system, let~$\xi(t):= 1- x(t)$. 
Then we obtain the dual system
	$
	\dot \xi(t) = \lambda(t) - (\lambda(t)+\sigma(t)) \xi(t). 
	$
	 Defining the output as~$w(t):=\lambda \xi(t)$ it follows 
	from Thm.~\ref{th1}   that here  a periodic switching cannot be outperformed
	by a constant inflow.
	
	A natural question then is how can one determine
	whether    the periodic gain of a given nonlinear system is larger or smaller than one.\rev{
	Other directions for further research include  analyzing
	the periodic gain of       important  nonlinear models like~TASEP
	and the~$n$-dimensional~RFM. A possible generalization would be to consider a bottleneck-input boundary linear hyperbolic PDE instead of
	a finite-dimensional system and its application to communication and traffic networks (see e.g.~\cite{espitia2017}).}

	\section*{Acknowledgments}	
	This work was partially supported by DARPA FA8650-18-1-7800, NSF 1817936, AFOSR FA9550-14-1-0060, Israel Science Foundation, and the US-Israel Binational Science Foundation.


	

\end{document}